\title{Direct limits of graded matrix algebras}
\author{Mikhail Kochetov}
\address{Department of Mathematics and Statistics, Memorial University of Newfoundland, St. John's, NL, A1C5S7, Canada.}
\email{mikhail@mun.ca}
\author{Felipe Yasumura}
\address{Department of Mathematics, Instituto de Matem\'atica, Estat\'istica e Ci\^encia da Computa\c c\~ao, Universidade de S\~ao Paulo, SP, Brazil}
\email{fyyasumura@ime.usp.br}
\thanks{M.K.~is supported by Discovery Grant 2018-04883 of the Natural Sciences and Engineering Research Council (NSERC) of Canada.}
\thanks{F.Y.~is supported by Fapesp grant 2024/14914-9.}
\newtheorem{Thm}{Theorem}
\newtheorem{lemma}[Thm]{Lemma}
\newtheorem{proposition}[Thm]{Proposition}
\newtheorem{corollary}[Thm]{Corollary}
\theoremstyle{definition}
\newtheorem{definition}[Thm]{Definition}
\theoremstyle{remark}
\newtheorem{remark}[Thm]{Remark}
\newtheorem{example}[Thm]{Example}
\newcommand{\under}[2]{\underset{#1}{#2}}
\renewcommand{\over}[2]{\overset{#1}{#2}}
\DeclareMathOperator{\supp}{supp}
\DeclareMathOperator{\Supp}{Supp}
\DeclareMathOperator{\Hom}{Hom}
\DeclareMathOperator{\End}{End}
\newcommand{\CC}{\mathbb{C}}
\newcommand{\FF}{\mathbb{F}}
\begin{document}
\begin{abstract}
The direct limit of finite-dimensional semisimple associative algebras is a purely algebraic counterpart to an important class of $C^\ast$-algebras. It is well known that, over an algebraically closed field $\mathbb{F}$, such limits are classified using their $K_0$ groups. More recently, Hazrat has developed a $G$-graded version of this theory for an abelian group $G$, introducing $\mathbb{Z}G$-modules $K_0^\mathrm{gr}$ and showing that they classify certain direct limits. In this paper, we consider direct limits of matrix algebras over $\mathbb{F}$ endowed with gradings by a finite abelian group $G$. In particular, we give an explicit description of $K_0^\mathrm{gr}$ modules of such limits, give conditions under which a direct limit of matrix algebras with elementary gradings ``absorbs'' a graded-division algebra, and show how this can be used to classify direct limits of matrix algebras with arbitrary $G$-gradings.
\end{abstract}
\maketitle

\section{Introduction}

Direct limits of finite-dimensional algebras have important connections to the theory of $C^\ast$-algebras in Functional Analysis (see, e.g., \cite{Bra,Dix}) and have also inspired purely algebraic investigations (e.g., \cite{BZ,Zal}).
The case of direct limits of complex semisimple associative algebras (i.e., finite direct products of matrix algebras over $\CC$) is well understood through their groups $K_0$, following the foundational works of Bratteli \cite{Bra} and Elliott \cite{E}. In the context of Functional Analysis, homomorphisms are required to respect the involution that is part of the definition of $C^\ast$-algebra, but it was already noted in \cite{E} that the involution does not play a role in the classification of direct limits, which is, in fact, the same over any algebraically closed field.

It is also natural to consider direct limits of algebras with some additional structure such as an action or grading by a group. In \cite{HaR}, direct limits of finite-dimensional $C^\ast$-algebras with an action of a compact group were classified in terms of $K_0$ groups under some technical restrictions on the action, which were later removed in \cite{Wang} at the expense of considering a more complicated invariant (a whole family of $K_0$ groups). A version of $K_0$ group was introduced by Hazrat in \cite{Haz} for an associative algebra $\mathcal{A}$ graded by an abelian group $G$ (see also \cite{HazVas} for graded algebras equipped with an involution that inverts degrees), and it is richer than in the ungraded case: $K_0^\mathrm{gr}(\mathcal{A})$ is not just an abelian group, but a $\mathbb{Z}G$-module.
It was shown in \cite[Ch.~5]{Haz} that this module (equipped with a partial ordering and an order-unit) classifies direct limits of finite direct products of matrix algebras with coefficients in a fixed graded field. Here we are interested in direct limits of simple finite-dimensional associative algebras equipped with a $G$-grading. 

Gradings by various groups on different kinds of finite-dimensional algebras (associative, Lie, Jordan, etc.) have been extensively studied, especially in the case of simple finite-dimensional algebras, see, e.g., the monograph \cite{EK13} and the references therein. In particular, gradings on matrix algebras $\mathrm{M}_n(\FF)$ over an algebraically closed field $\FF$ of characteristic $0$ were described in \cite{BaSeZa2001}. Even though specific gradings on infinite-dimensional algebras appear frequently (e.g., gradings by the weight lattice in the theory of Kac--Moody Lie algebras and their representations), the classification of gradings by arbitrary groups has not yet received much attention in the infinite-dimensional context. We can mention \cite{BaBrKoch}, which classifies $G$-gradings on the algebras of operators of finite rank --- such as the algebra of finitary matrices $\mathrm{M}_\infty(\FF)$, which was treated earlier in \cite{BaZa2010} --- and on the related finitary simple Lie algebras, and \cite{WY}, which classifies $G$-gradings on certain algebras of triangularizable operators.

We want to study $G$-gradings on associative algebras that are locally finite-di\-men\-sional simple, i.e., associative algebras in which every finitely generated subalgebra is contained in a finite-dimensional simple subalgebra. These gradings naturally arise as direct limits of gradings on finite-dimensional simple (associative) algebras. In this paper we will focus on direct limits of unital embeddings of such algebras, which is the case in a certain sense opposite to $M_\infty(\FF)$, see Baranov and Zhilinskii \cite{BZ}. By a graded version of Wedderburn Theorem, a finite-dimensional simple algebra $\mathcal{R}$ equipped with a $G$-grading is isomorphic to a matrix algebra over a graded-division algebra $\mathcal{D}$. Since $\mathcal{D}$ is not necessarily commutative and a homomorphism of graded algebras $\mathcal{R}\to\mathcal{R}'$ does not necessarily respect the corresponding graded-division algebras, the result of Hazrat mentioned above does not apply in general, but only to the so-called elementary gradings. Our purpose will then be two-fold: for a finite abelian group $G$, classify direct limits of such graded algebras up to isomorphism (see \Cref{last_prop}) and give an explicit description of the graded $K_0$ groups that appear in this classification. Such a description is possible because we consider a direct limit of \emph{simple} finite-dimensional algebras: in the ungraded case, the $K_0$ group of the limit is then an additive subgroup of $\mathbb{Q}$ (with its natural ordering), while in our case, we show that it is a $\mathbb{Z}G$-submodule of $\mathbb{Q}\overline{G}$ for a certain quotient $\overline{G}$ of $G$ (see \Cref{K_0_ultraelementary,K_0_division}). This description allows us to give a more explicit form of Hazrat's result in our setting (see \Cref{thm1}) and give a criterion to determine which direct limits can be obtained from elementary gradings (see \Cref{thm}).

\section{Preliminaries}
Let $\mathbb{F}$ be a field and $G$ be an abelian group. All algebras under consideration will be associative, unital, and over $\mathbb{F}$. We will later assume that $\mathbb{F}$ is algebraically closed and $G$ is finite, but 
most results in this section do not require these assumptions.

\subsection{Graded algebras and graded modules} We will now review some basic concepts concerning graded algebras, which can be found, for example, in Chapters 1 and 2 of the monograph \cite{EK13}. A $G$-grading $\Gamma$ on an algebra $\mathcal{A}$ is a decomposition into a direct sum of vector subspaces $\mathcal{A}=\bigoplus_{g\in G}\mathcal{A}_g$ such that $\mathcal{A}_g\mathcal{A}_h\subseteq\mathcal{A}_{gh}$, for all $g,h\in G$. A $G$-graded algebra is an algebra with a fixed $G$-grading, i.e., a pair $(\mathcal{A},\Gamma)$, which we will write as $\mathcal{A}$ when it does not cause confusion. The nonzero elements in each subspace $\mathcal{A}_g$ are called \emph{homogeneous}, and we write $\deg a=g$ for $a\in\mathcal{A}_g\setminus\{0\}$. We allow some of the $\mathcal{A}_g$ to be zero, so we define the \emph{support} of the grading, denoted by $\Supp\Gamma$ or $\Supp\mathcal{A}$, to be the set of all $g\in G$ such that $\mathcal{A}_g\ne0$. A subspace $\mathcal{B}$ of $\mathcal{A}$ is \emph{graded} if it is the direct sum of the intersections $\mathcal{B}\cap\mathcal{A}_g$, $g\in G$. In particular, we can speak of graded subalgebras and graded ideals. A graded algebra $\mathcal{A}$ (with nonzero product) is said to be \emph{graded-simple} if it has no graded ideals except $0$ and $\mathcal{A}$.

Given two $G$-graded algebras $\mathcal{A}=\bigoplus_{g\in G}\mathcal{A}_g$ and $\mathcal{B}=\bigoplus_{g\in G}\mathcal{B}_g$, a \emph{homomorphism of graded algebras} $\mathcal{A}\to\mathcal{B}$ is an algebra homomorphism $f:\mathcal{A}\to\mathcal{B}$ such that $f(\mathcal{A}_g)\subseteq\mathcal{B}_g$, for each $g\in G$. If $f$ is an isomorphism, then we say that $\mathcal{A}$ and $\mathcal{B}$ are \emph{isomorphic as graded algebras} (or \emph{$G$-graded-isomorphic}) and write $\mathcal{A}\cong\mathcal{B}$.

If $\alpha:G\to H$ is a group homomorphism, then any $G$-grading $\Gamma$ on an algebra $\mathcal{A}$ gives rise to an $H$-grading on $\mathcal{A}$ whose homogeneous components are defined by $\mathcal{A}_h:=\bigoplus_{g\in\alpha^{-1}(h)}\mathcal{A}_g$, for all $h\in H$. This grading is denoted by ${}^\alpha\Gamma$, and the $H$-graded algebra $(\mathcal{A},{}^\alpha\Gamma)$ by ${}^\alpha\mathcal{A}$.

A \emph{graded-division} algebra is a $G$-graded unital algebra in which every nonzero homogeneous element is invertible. A finite-dimensional graded-division algebra $\mathcal D$ over an algebraically closed field $\mathbb{F}$ is isomorphic to a twisted group algebra $\mathbb{F}^\sigma T$, with its natural $T$-grading regarded as a $G$-grading, where $T\leq G$ is the support of $\mathcal D$ and $\sigma:T\times T\to\mathbb{F}^\times$ is a 2-cocycle with respect to the trivial action of $T$ on the multiplicative group $\mathbb{F}^\times$ of the base field. Thus, the  isomorphism classes of these graded-division algebras are parametrized by the pairs $(T,[\sigma])$ where $[\sigma]$ is the cohomology class of $\sigma$ in $H^2(T,\mathbb{F}^\times)$. Furthermore, $\mathcal D$ is simple as an ungraded algebra if and only if $\sigma$ is nondegenerate; since we assume that $G$ is abelian, it is known that in this case $T$ is a direct product of squares of cyclic groups.

A $G$-graded left $\mathcal{A}$-module is a left $\mathcal{A}$-module $\mathcal{V}$ endowed with a decomposition $\mathcal{V}=\bigoplus_{g\in G}\mathcal{V}_g$ such that $\mathcal{A}_g\mathcal{V}_h\subseteq\mathcal{V}_{gh}$, for all $g$, $h\in G$. Given a $G$-graded left $\mathcal{A}$-module $\mathcal{V}$ and an element $g\in G$, the \emph{(right) shift} of $\mathcal{V}$ by $g$, denoted $\mathcal{V}^{[g]}$, is the left $\mathcal{A}$-module $\mathcal{V}$ endowed with the following $G$-grading: $\mathcal{V}^{[g]}_h:=\mathcal{V}_{hg^{-1}}$, for each $h\in G$. Note that $\mathcal{V}^{[g]}$ is a $G$-graded left $\mathcal{A}$-module as well. Graded right modules and their shifts are defined similarly. Any graded module over a graded-division algebra admits a basis consisting of homogeneous elements.

Given a $G$-graded algebra $\mathcal{A}$ and an $n$-tuple $\gamma=(g_1,\ldots,g_n)\in G^n$, we obtain a $G$-grading on $\mathrm{M}_n(\mathcal{A})$, by declaring
\begin{equation*}
\deg E_{ij}(a):= g_i(\deg a)g_j^{-1}\quad\text{for nonzero homogeneous }a\in\mathcal{A},
\end{equation*}
where $E_{ij}(a)$ denotes the matrix whose entry in position $(i,j)$ is $a$ and all other entries are $0$. In particular, for $\mathcal{A}=\mathbb{F}$, we obtain gradings on $\mathrm{M}_n(\mathbb{F})$, which are called \emph{elementary}. 

If we permute the entries of $\gamma$, we will obtain an isomorphic $G$-grading, so it is convenient to consider the multiset in $G$ corresponding to $\gamma$, i.e., the map $x_\gamma:G\to\mathbb{Z}_{\ge0}$ where
$$
x_\gamma(g)=|\{i\mid g_i=g\}|.
$$
The size of a multiset $x:G\to\mathbb{Z}_{\ge0}$ is $|x|:=\sum_{g\in G}x(g)$. We say that $x$ is finite if this sum is finite.
The finite multisets in $G$ can be interpreted as elements of the group semiring $\mathbb{Z}_{\ge0}G$, which will allow us to multiply them. More precisely,  a finite multiset $x$ is identified with the element $\sum_{g\in G}x(g)g\in\mathbb{Z}_{\ge 0}G$. For example, if $x(e)=n$ and $x(g)=0$ for all $g\ne e$, we obtain the element $ne$, which we will write as $n$, since $e$ is the identity element of the semiring $\mathbb{Z}_{\ge 0}G$. 

For any multiset $x$ in $G$ of size $n$, we can choose a corresponding $n$-tuple $\gamma$ and define a $G$-grading on $\mathrm{M}_n(\mathcal{A})$. 
Since all choices of $\gamma$ lead to isomorphic $G$-graded algebras, we will abuse notation and denote any of them by $\mathrm{M}_x(\mathcal{A})$. For example, $\mathrm{M}_n(\mathcal{A})$ in this notation would refer to the case where the grading comes entirely from $\mathcal{A}$, which gives the trivial grading if $\mathcal{A}=\mathbb{F}$. It is easy to check that, for any nonzero $x,y\in\mathbb{Z}_{\ge 0}G$, we have 
\[
\mathrm{M}_x(\mathrm{M}_y(\mathcal{A}))\cong\mathrm{M}_{xy}(\mathcal{A}).
\]
We should also note that, since we assume that $G$ is abelian, the tensor product of $G$-graded algebras $\mathcal A$ and $\mathcal B$ over $\mathbb{F}$ is $G$-graded by defining 
\[
\deg(a\otimes b):=(\deg a)(\deg b)\quad\text{for nonzero homogeneous }a\in\mathcal A\text{ and }b\in\mathcal B.
\]
Then we have $\mathrm{M}_x(\mathcal{A})\cong \mathrm{M}_x\otimes\mathcal{A}$, where $\mathrm{M}_x:=\mathrm{M}_x(\mathbb{F})$. It follows that
\[
\mathrm{M}_x(\mathcal{A})\otimes\mathrm{M}_y(\mathcal{B})\cong\mathrm{M}_{xy}(\mathcal{A}\otimes\mathcal{B}).
\]

Now, a graded version of Wedderburn Theorem says that any finite-dimensional graded-simple associative algebra is $G$-graded-isomorphic to the endomorphism algebra, $\End_\mathcal{D}(\mathcal{V})$, of a graded right module $\mathcal{V}$ over a graded-division algebra $\mathcal{D}$. The grading on $\End_\mathcal{D}(\mathcal{V})$ is defined by declaring a nonzero linear map $f:\mathcal{V}\to\mathcal{W}$ homogeneous of degree $g$ if $f(\mathcal{V}_h)\subseteq\mathcal{W}_{gh}$ for all $h\in G$. Fixing an ordered basis of $\mathcal{V}$ consisting of homogeneous elements, this algebra can be written as $\mathrm{M}_x(\mathcal{D})$, where $x$ is the multiset determined by the tuple of the degrees of the basis elements. 
Moreover, the graded-division algebra $\mathcal{D}$ is determined up to isomorphism, while the image of $x$ in $\mathbb{Z}_{\ge 0}(G/T)$ is determined up to shift, where $T=\Supp\mathcal{D}$. In contrast, for a direct limit of such graded matrix algebras, one cannot assign a unique graded-division algebra, as shown by the following example.

\begin{example}\label{ex1}
Let $\mathcal{Q}=\mathrm{M}_2(\mathbb{F})$ be endowed with the following grading by the Klein group $\mathbb{Z}_2\times\mathbb{Z}_2$. We assign the matrices
$$
X=\begin{pmatrix}1&0\\0&-1\end{pmatrix}\text{ and } Y=\begin{pmatrix}0&1\\1&0\end{pmatrix}
$$
degrees $(1,0)$ and $(0,1)$, respectively, and, since $X^2=I=Y^2$ and $XY=-YX$, this assignment defines a $\mathbb{Z}_2\times\mathbb{Z}_2$-grading on $\mathcal{Q}$ with components $\mathcal{Q}_{(i,j)}=\mathrm{Span}\{X^iY^j\}$, for each $(i,j)\in\mathbb{Z}_2\times\mathbb{Z}_2$. Clearly, this is a \emph{division grading} in the sense that it makes $\mathcal{Q}$ a graded-division algebra, with support $T=\mathbb{Z}_2\times\mathbb{Z}_2$. Then, we obtain a sequence of unital embeddings of the form $\mathcal{A}\to\mathcal{A}\otimes1\subseteq\mathcal{A}\otimes\mathcal{B}$ given by $a\mapsto a\otimes 1$:
$$
\mathcal{Q}\longrightarrow\mathcal{Q}\otimes\mathcal{Q}\longrightarrow\mathcal{Q}\otimes\mathcal{Q}\otimes\mathcal{Q}\longrightarrow\cdots
$$
Denote $\mathcal{Q}^{\otimes n}=\mathcal{Q}\otimes\cdots\otimes\mathcal{Q}$ (tensor product of $n$ copies of $\mathcal{Q}$). This sequence has two obvious subsequences:
$$
\begin{tikzcd}
\mathcal{Q}\arrow{rd}\arrow[dashed]{rr}&&\mathcal{Q}^{\otimes3}\arrow{rd}\arrow[dashed]{rr}&&\mathcal{Q}^{\otimes5}\arrow{rd}\arrow[dashed]{r}&\cdots\\%
&\mathcal{Q}^{\otimes2}\arrow[dashed]{rr}\arrow{ur}&&\mathcal{Q}^{\otimes4}\arrow[dashed]{rr}\arrow{ur}&&\cdots
\end{tikzcd}
$$
and the direct limit of the sequence is isomorphic to the direct limit of each of these two subsequences.
It is easy to see that $\mathcal{Q}^{\otimes2}\cong\mathrm{M}_x(\mathbb{F})$ where $x$ is the sum of the elements of $T$ in $\mathbb{Z}_{\geq 0}T$. Hence, we get $\mathcal{A}\otimes\mathcal{Q}\cong\mathcal{A}$, where $\mathcal{A}$ is the direct limit of
$$
\mathrm{M}_x(\mathbb{F})\longrightarrow\mathrm{M}_{x}(\mathbb{F})^{\otimes2}\longrightarrow\mathrm{M}_{x}(\mathbb{F})^{\otimes3}\longrightarrow\cdots.
$$
We will see in \Cref{thm} under what conditions the graded-division part will ``disappear'' in the limit.
\end{example}

\subsection{Graded Brauer group}

Fix an abelian group $G$. Similarly to the classical Brauer group, the \emph{$G$-graded Brauer group} of $\mathbb{F}$, which we denote $\mathrm{Br}_G(\mathbb{F})$, can be defined as the set of equivalence classes of (finite-dimensional) central simple $\mathbb{F}$-algebras, now equipped with a $G$-grading, and the group operation is induced by tensor product: $[\mathcal{A}][\mathcal{B}]:=[\mathcal{A}\otimes\mathcal{B}]$. The equivalence relation is defined as follows: $\mathcal{A}\sim\mathcal{B}$ if and only if $\mathrm{M}_x(\mathcal{A})\cong \mathrm{M}_y(\mathcal{B})$ for some $0\neq x,y\in\mathbb{Z}_{\geq 0}G$. In particular, the identity element, $[\mathbb{F}]$, consists of the matrix algebras over $\mathbb{F}$ equipped with an elementary grading. The existence of inverses is shown in the usual way: if a central simple algebra $\mathcal{A}$ is equipped with a $G$-grading, the same homogeneous components define a grading on the opposite algebra $\mathcal{A}^{\mathrm{op}}$, and we have $\mathcal{A}\otimes\mathcal{A}^{\mathrm{op}}\cong \End_\mathbb{F}(\mathcal{A})$ as graded algebras, hence $[\mathcal{A}^\mathrm{op}]$ is the inverse of $[\mathcal{A}]$ in $\mathrm{Br}_G(\mathbb{F})$. 

It follows from the graded Wedderburn Theorem that each equivalence class contains a graded-division algebra, which is unique up to isomorphism. Thus, $\mathrm{Br}_G(\mathbb{F})$ allows us to keep track of the division part when computing tensor products: if $\mathcal{A}$ and $\mathcal{A}'$ are central simple algebras endowed with a $G$-grading, then $\mathcal{A}\cong\mathrm{M}_x(\mathcal{D})$ and $\mathcal{A}'\cong\mathrm{M}_{x'}(\mathcal{D}')$, where $\mathcal{D}$ and $\mathcal{D}'$ are graded-division algebras, and we have 
$$
\mathcal{A}\otimes\mathcal{A}'=\mathrm{M}_{xx'y}(\mathcal{E}),
$$
where $\mathcal{E}$ is the graded-division algebra determined by $[\mathcal{E}]=[\mathcal{D}][\mathcal{D}']$ and $y\in\mathbb{Z}_{\ge 0}G$ satisfies $\mathcal{D}\otimes\mathcal{D}'\cong\mathrm{M}_y(\mathcal{E})$.
 
In general, the graded Brauer group $\mathrm{Br}_G(\mathbb{F})$ is complicated, since it contains the classical Brauer group $\mathrm{Br}(\mathbb{F})$. If $\mathbb{F}$ is algebraically closed and $G$ is finite, then $\mathrm{Br}_G(\mathbb{F})\cong\Hom(\hat{G}^{\wedge 2},\mathbb{F}^\times)$ (see e.g. \cite{EK2015}). Here $\hat{G}$ is the dual group of $G$, which can be defined as $\Hom(G,\mathbb{Q}/\mathbb{Z})\cong\Hom(G,\mathbb{C}^\times)$, and the wedge denotes the exterior power in the category of abelian groups (as $\mathbb{Z}$-modules). We also note that, since in this case the nonzero components of a graded-division algebra $\mathcal{D}$ are 1-dimensional, we have
\begin{equation}\label{DDop}
\mathcal{D}\otimes\mathcal{D}^\mathrm{op}\cong\mathrm{M}_{x_T}(\mathbb{F}),
\end{equation}
where $T=\Supp\mathcal{D}$ and $x_T:=\sum_{t\in T}t\in\mathbb{Z}_{\ge0}G$.

\subsection{\label{subsec:graded_K0}Graded $K_0$ groups}
We will now recall the basics of the graded version of $K_0$ groups, denoted $K_0^\mathrm{gr}$ or $K_0^G$ when it is necessary to indicate the grading (abelian) group $G$, following the book by Hazrat \cite{Haz}. Let $\mathcal{A}$ be a unital $G$-graded algebra. A \emph{graded-projective} module is a projective object in the category of graded modules. It is known that a graded-projective module is the same as a projective module with a grading (see, e.g., \cite[Proposition 1.2.15]{Haz}). 

We consider the set of isomorphism classes of finitely generated, projective and graded (left) $\mathcal{A}$-modules, denoted by $K_0^\mathrm{gr}(\mathcal{A})^+$. Given $[P]$, $[Q]\in K_0^\mathrm{gr}(\mathcal{A})^+$, we set $[P]+[Q]:=[P\oplus Q]$. This operation is well defined and makes $K_0^\mathrm{gr}(\mathcal{A})^+$ a commutative monoid. Denote by $K_0^\mathrm{gr}(\mathcal{A})$ its group completion (also known as its Grothendieck group), i.e., the set of all equivalence classes of $[P]-[Q]$ for $[P]$, $[Q]\in K_0^\mathrm{gr}(\mathcal{A})^+$ (see, e.g., \cite[Section 3.1.1]{Haz}). Then, $(K_0^\mathrm{gr}(\mathcal{A}),K_0^\mathrm{gr}(\mathcal{A})^+)$ is a partially ordered group, where, for $x$, $y\in K_0^\mathrm{gr}(\mathcal{A})$, one defines $x\ge y$ if and only if $x-y\in K_0^\mathrm{gr}(\mathcal{A})^+$ (see, e.g., \cite[Example 3.6.3]{Haz}).

The new feature, compared to the ordinary $K_0$ group, is that $K_0^\mathrm{gr}(\mathcal{A})$ has a structure of $\mathbb{Z}G$-module, by letting each $g\in G$ act as a shift: $g[P]:=[P^{[g]}]$. Moreover, $K_0^\mathrm{gr}(\mathcal{A})^+$ is invariant under the action of $\mathbb{Z}_{\ge0}G$, so $K_0^\mathrm{gr}(\mathcal{A})$ is a partially ordered $\mathbb{Z}G$-module. 

The equivalence class $[\mathcal{A}]$ of the free module of rank $1$ is a distinguished element in $K_0^\mathrm{gr}(\mathcal{A})$, and it is called the \emph{order-unit}. 
%It satisfies the following property: for any $[P]\in K_0^\mathrm{gr}(\mathcal{A})^+$, there exists $x\in\mathbb{Z}_{\ge0}G$ such that $[P]\le x[\mathcal{A}]$. 

We include the partial order, $\mathbb{Z}G$-action, and the order-unit in the definition of $K_0^\mathrm{gr}(\mathcal{A})$, which will stand for the triple $(K_0^\mathrm{gr}(\mathcal{A}),K_0^\mathrm{gr}(\mathcal{A})^+,[\mathcal{A}])$. A morphism from $(K,K^+,x_0)$ to $(K',K^{\prime+},x_0')$ means a $\mathbb{Z}G$-module homomorphism $\varphi:K\to K'$ such that $\varphi(K^+)\subseteq K^{\prime+}$ and $\varphi(x_0)=x_0'$. Then, $K_0^\mathrm{gr}$ becomes a functor, and it is known to commute with direct limits \cite[Theorem 3.2.4]{Haz}. 
%This group can also be defined in terms of homogeneous idempotents (see, for instance, \cite[Section 3.2]{Haz}), which can be used to define $K_0^\mathrm{gr}$ of non-unital algebras.

It turns out that this functor is sufficient to classify certain direct limits of ``graded matricial algebras'', i.e., $G$-graded algebras of the form
$$
\mathrm{M}_{x_1}(\mathcal{C})\times\cdots\times\mathrm{M}_{x_t}(\mathcal{C}),
$$
where $x_1$, \dots, $x_t\in\mathbb{Z}_{\ge0}G$ and $\mathcal{C}$ is a fixed commutative graded-division algebra. A \emph{$G$-graded ultramatricial} $\mathcal{C}$-algebra is a direct limit of unital $\mathcal{C}$-linear embeddings of matricial $\mathcal{C}$-algebras (see \cite[Definition 5.2.1]{Haz}). Then, by \cite[Theorem 5.2.4]{Haz}, two $G$-graded ultramatricial $\mathcal{C}$-algebras $\mathcal{A}$ and $\mathcal{A}'$ are isomorphic if and only if the triples $(K_0^\mathrm{gr}(\mathcal{A}),K_0^\mathrm{gr}(\mathcal{A})^+,[\mathcal{A}])$ and $(K_0^\mathrm{gr}(\mathcal{A}'),K_0^\mathrm{gr}(\mathcal{A}')^+,[\mathcal{A}'])$ are isomorphic. This can be considered as the graded version of Elliot's result. 
%Moreover, they prove in \cite[Theorem 5.2.5]{Haz} that $\mathcal{A}$ and $\mathcal{A}'$ are Morita equivalent if and only if $(K_0^\mathrm{gr}(\mathcal{A}),K_0^\mathrm{gr}(\mathcal{A})^+)\cong(K_0^\mathrm{gr}(\mathcal{A}'),K_0^\mathrm{gr}(\mathcal{A}')^+)$ (i.e., isomorphic as ordered groups and not necessarily preserving the order-unit).

\subsection{Preliminaries on direct limits of graded matrix algebras}
We start with the following graded algebra version of a well-known isomorphism criterion for direct limits (see, for instance, \cite[Proposition 3.1]{BZ}):
\begin{lemma}\label{lem1}
Consider direct systems $(\mathcal{A}_j,\iota_{ji})$ and $(\mathcal{A}_j',\iota_{ji}')$ of finitely generated (graded) algebras where $j\in\mathbb{N}$, and $\iota_{ji}:\mathcal{A}_i\to\mathcal{A}_j$ and $\iota_{ji}':\mathcal{A}_i'\to\mathcal{A}_j'$ are embeddings. Then, $\lim\limits_{\longrightarrow}(\mathcal{A}_j,\iota_{ji})\cong\lim\limits_{\longrightarrow}(\mathcal{A}_j',\iota_{ji})$ if and only if there exist subsequences $\{j_k\}$, $\{j_k'\}$ and embeddings $\varepsilon_k:\mathcal{A}_{j_k}\to\mathcal{A}_{j_k'}'$, $\varepsilon_k':\mathcal{A}_{j_k'}'\to\mathcal{A}_{j_{k+1}}$ such that

\[
\begin{tikzcd}
\mathcal{A}_{j_1}\arrow{d}{\varepsilon_1}\arrow{r}{\iota_{j_2,j_1}}&\mathcal{A}_{j_2}\arrow{d}{\varepsilon_2}\arrow{r}{\iota_{j_3,j_2}}&\mathcal{A}_{j_3}\arrow{d}{\varepsilon_3}\arrow{r}{\iota_{j_4,j_3}}&\cdots\\%
\mathcal{A}_{j_1'}'\arrow{r}{\iota_{j_2',j_1'}'}\arrow{ur}{\varepsilon_1'}&\mathcal{A}_{j_2'}'\arrow{r}{\iota_{j_3',j_2'}'}\arrow{ur}{\varepsilon_2'}&\mathcal{A}_{j_3'}'\arrow{r}{\iota_{j_4',j_3'}'}\arrow{ur}{\varepsilon_3'}&\cdots
\end{tikzcd}
\]
\end{lemma}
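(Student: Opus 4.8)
The plan is to prove both implications, the reverse being the easy direction and the forward the one where the hypotheses (finite generation and injectivity of the transition maps) do the real work. Throughout I will use that, since all transition maps $\iota_{ji}$ are embeddings, the canonical maps $\mu_j\colon\mathcal{A}_j\to\mathcal{A}:=\lim\limits_{\longrightarrow}(\mathcal{A}_j,\iota_{ji})$ into the limit are injective and satisfy $\mu_i=\mu_j\circ\iota_{ji}$ for $i\le j$, and likewise for the primed system with maps $\mu_j'$ and limit $\mathcal{A}'$; I will also use that passing to a cofinal subsequence does not change the direct limit.

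For the ``if'' direction, I would first reduce to $j_k=k$ and $j_k'=k$ by relabelling along the cofinal subsequences. Commutativity of the ladder amounts to the two families of triangle identities $\varepsilon_k'\circ\varepsilon_k=\iota_{k+1,k}$ and $\varepsilon_{k+1}\circ\varepsilon_k'=\iota_{k+1,k}'$. From these it follows directly that $\{\varepsilon_k\}$ is a morphism of direct systems and hence induces a graded homomorphism $\varepsilon\colon\mathcal{A}\to\mathcal{A}'$, and similarly $\{\varepsilon_k'\}$ induces $\varepsilon'\colon\mathcal{A}'\to\mathcal{A}$. The same triangle identities say that on each $\mathcal{A}_k$ the composite $\varepsilon'\circ\varepsilon$ agrees with the transition map $\iota_{k+1,k}$, which becomes the identity in the limit; thus $\varepsilon'\circ\varepsilon=\mathrm{id}_{\mathcal{A}}$ and $\varepsilon\circ\varepsilon'=\mathrm{id}_{\mathcal{A}'}$, so $\varepsilon$ is a graded isomorphism.

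For the ``only if'' direction, fix a graded isomorphism $\phi\colon\mathcal{A}\to\mathcal{A}'$ with inverse $\psi$, and build the subsequences and embeddings by an Elliott-type back-and-forth recursion. The key step, and the only place where the hypotheses are essential, is the following: given an index $j$, the image $\phi(\mu_j(\mathcal{A}_j))$ is a finitely generated subalgebra of $\mathcal{A}'=\bigcup_{j'}\mu_{j'}'(\mathcal{A}_{j'}')$, so it is contained in $\mu_{j'}'(\mathcal{A}_{j'}')$ for some sufficiently large $j'$; since $\mu_{j'}'$ is injective, $\varepsilon:=(\mu_{j'}')^{-1}\circ\phi\circ\mu_j$ is a well-defined graded embedding $\mathcal{A}_j\to\mathcal{A}_{j'}'$ with $\mu_{j'}'\circ\varepsilon=\phi\circ\mu_j$. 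Alternating this construction between the two systems (using $\psi$ in the other direction) produces $j_1<j_2<\cdots$, $j_1'<j_2'<\cdots$ and graded embeddings $\varepsilon_k,\varepsilon_k'$ characterized by $\mu_{j_k'}'\circ\varepsilon_k=\phi\circ\mu_{j_k}$ and $\mu_{j_{k+1}}\circ\varepsilon_k'=\psi\circ\mu_{j_k'}'$. The triangle identities are then a formal consequence: composing $\varepsilon_k'\circ\varepsilon_k$ with the injective map $\mu_{j_{k+1}}$ and using $\psi\circ\phi=\mathrm{id}$ gives $\mu_{j_{k+1}}\circ\varepsilon_k'\circ\varepsilon_k=\mu_{j_k}=\mu_{j_{k+1}}\circ\iota_{j_{k+1},j_k}$, whence $\varepsilon_k'\circ\varepsilon_k=\iota_{j_{k+1},j_k}$ by injectivity, and the lower triangles follow symmetrically from $\phi\circ\psi=\mathrm{id}$.

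Finally I would note that the gradedness requires no extra work: $\phi$, $\psi$, and all the canonical maps are graded, so every composite appearing above is a homomorphism of graded algebras, and injectivity of the factors makes each $\varepsilon_k,\varepsilon_k'$ a graded embedding. I expect the only genuine obstacle to be the ``capturing'' step in the forward direction---verifying that the image of a finitely generated subalgebra lands in a single finite stage---which is exactly where finite generation of the $\mathcal{A}_j$ and injectivity of the transition maps are used; once these embeddings are in place, the verification of commutativity is a routine diagram chase.
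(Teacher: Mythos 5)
The paper offers no proof of this lemma, citing it as a well-known result (\cite[Proposition~3.1]{BZ}); your argument is the standard Elliott-type back-and-forth proof that such a reference would contain, and it is correct. In particular, the key ``capturing'' step (a finitely generated subalgebra of the limit lies in a single finite stage because finitely many generators do and each $\mu'_{j'}(\mathcal{A}'_{j'})$ is a subalgebra), the derivation of the triangle identities from $\psi\circ\phi=\mathrm{id}$ via injectivity of the canonical maps, and the converse via mutually inverse induced maps on the limits are all exactly as expected.
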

% \begin{remark}
% The previous lemma works in a more general context: homomorphisms (not necessary embeddings) and a direct limit of finitely presented algebras
% \end{remark}

Two embeddings $\iota,\iota':\mathcal{A}\to\mathcal{B}$ of $G$-graded algebras are said to be \emph{equivalent} if $\iota'=\mathrm{Int}(b)\iota$, where $b\in\mathcal{B}$ is a homogeneous invertible element and $\mathrm{Int}(b):x\mapsto bxb^{-1}$ is the inner automorphism of $\mathcal{B}$ defined by $b$. (Since we are assuming that $G$ is abelian, this inner automorphism preserves degrees.)
\begin{lemma}\label{lm:equiv_embeddings}
Let $(\mathcal{A}_j,\iota_{ji})$ be a direct system of graded associative unital algebras where each $\iota_{ji}$ is a unital embedding. Given $(\mathcal{A}_j,\iota_{ji}')$, where $\iota_{ji}'$ is equivalent to $\iota_{ji}$, $\forall i,j\in\mathbb{N}$, then $\lim\limits_{\longrightarrow}(\mathcal{A},\iota_{ji})\cong\lim\limits_{\longrightarrow}(\mathcal{A},\iota_{ji}')$.
\end{lemma}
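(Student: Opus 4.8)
The plan is to prove something slightly stronger than an application of \Cref{lem1}: I will exhibit an honest isomorphism of direct systems, that is, a commuting ladder all of whose vertical arrows are graded automorphisms. Since a direct system indexed by $\mathbb{N}$ is completely determined by its consecutive connecting maps (we have $\iota_{ji}=\iota_{j,j-1}\circ\cdots\circ\iota_{i+1,i}$, and likewise $\iota_{ji}'=\iota_{j,j-1}'\circ\cdots\circ\iota_{i+1,i}'$), it suffices to work with these. Abbreviate $\iota_j:=\iota_{j+1,j}$ and $\iota_j':=\iota_{j+1,j}'$; by hypothesis each $\iota_j'$ is equivalent to $\iota_j$, so $\iota_j'=\mathrm{Int}(b_j)\circ\iota_j$ for some homogeneous invertible $b_j\in\mathcal{A}_{j+1}$.

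First I would define a sequence of correction elements $c_j\in\mathcal{A}_j$ recursively by $c_1:=1$ and $c_{j+1}:=b_j\,\iota_j(c_j)$. Each $c_{j+1}$ is homogeneous and invertible, being the product of the homogeneous invertible $b_j$ with the image $\iota_j(c_j)$ of a homogeneous invertible element under the unital embedding $\iota_j$. I then set $\psi_j:=\mathrm{Int}(c_j)$; because $c_j$ is homogeneous and $G$ is abelian, each $\psi_j$ is a \emph{graded} automorphism of $\mathcal{A}_j$.

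The key step is to check that the squares
\[
\begin{tikzcd}
\mathcal{A}_j\arrow{r}{\iota_j}\arrow{d}{\psi_j}&\mathcal{A}_{j+1}\arrow{d}{\psi_{j+1}}\\
\mathcal{A}_j\arrow{r}{\iota_j'}&\mathcal{A}_{j+1}
\end{tikzcd}
\]
commute. Since $\iota_j$ is an algebra homomorphism, $\iota_j\circ\mathrm{Int}(c_j)=\mathrm{Int}\bigl(\iota_j(c_j)\bigr)\circ\iota_j$, and therefore
\begin{align*}
\iota_j'\circ\psi_j&=\mathrm{Int}(b_j)\circ\iota_j\circ\mathrm{Int}(c_j)=\mathrm{Int}\bigl(b_j\,\iota_j(c_j)\bigr)\circ\iota_j\\
&=\mathrm{Int}(c_{j+1})\circ\iota_j=\psi_{j+1}\circ\iota_j,
\end{align*}
where we have used $b_j\,\iota_j(c_j)=c_{j+1}$. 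Thus $\{\psi_j\}$ is a morphism of direct systems from $(\mathcal{A}_j,\iota_j)$ to $(\mathcal{A}_j,\iota_j')$ whose components are all graded isomorphisms.

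Finally I would invoke functoriality of the direct limit: a morphism of direct systems consisting of isomorphisms induces an isomorphism of the limits, with inverse induced by the $\psi_j^{-1}$ (which form the commuting ladder in the opposite direction). This yields $\lim\limits_{\longrightarrow}(\mathcal{A}_j,\iota_{ji})\cong\lim\limits_{\longrightarrow}(\mathcal{A}_j,\iota_{ji}')$ as $G$-graded algebras. I do not anticipate a real obstacle; the only delicate point is the bookkeeping in the recursion, where the correction $c_{j+1}$ at level $j+1$ must simultaneously absorb the new twist $b_j$ and carry forward the correction $\iota_j(c_j)$ already applied at level $j$ --- precisely what the formula $c_{j+1}=b_j\,\iota_j(c_j)$ achieves.
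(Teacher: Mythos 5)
Your proof is correct and follows essentially the same route as the paper's: the authors likewise build the correcting automorphisms as $\varphi_k=\mathrm{Int}\bigl(\prod_{i=2}^k\iota_{k,i}(a_i)\bigr)$ (your $c_{j+1}=b_j\,\iota_j(c_j)$ is exactly this accumulated product, written recursively and with the factor order spelled out correctly) and then verifies the same commuting ladder. Your write-up is just a more explicit version of the paper's argument, so nothing further is needed.
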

\begin{proof}
Consider the homogeneous invertible elements $a_i\in\mathcal{A}_i$, $i\ge 2$, such that $\iota_{i,i-1}'=\mathrm{Int}(a_i)\iota_{i,i-1}$. We define automorphisms $\varphi_k:\mathcal{A}_k\to\mathcal{A}_k$ recursively: $\varphi_1=\mathrm{id}$ and $\varphi_k=\mathrm{Int}(\prod_{i=2}^k\iota_{k,i}(a_{i}))$, for $k>1$. Then, $\iota_{k+1,k}'\varphi_k=\varphi_{k+1}\iota_{k+1,k}$, $\forall k$. This readily implies $\lim\limits_{\longrightarrow}(\mathcal{A},\iota_{ji})\cong\lim\limits_{\longrightarrow}(\mathcal{A},\iota_{ji}')$.
\end{proof}

%\subsection{Graded Noether--Skolem Theorem}

Recall that the Noether--Skolem Theorem says that every automorphism of a central simple algebra is inner. In the graded setting, we have \cite[Theorem 2.1]{RE}: if $G$ is an abelian group and an automorphism $\psi$ of a $G$-graded-simple algebra $\mathcal{B}$ is inner, then $\psi=\mathrm{Int}(b)$ for some homogeneous invertible $b\in\mathcal{B}$. This implies a graded version of the Noether--Skolem Theorem: if $\mathcal{B}$ is central simple and $G$-graded, then any automorphism of $\mathcal{B}$ as a graded algebra has the form $\mathrm{Int}(b)$ for $b$ as above.

A more general form of Noether--Skolem Theorem says that all unital embeddings of central simple algebras $\mathcal{A}\to\mathcal{B}$ are conjugate by an invertible element of $\mathcal{B}$. This does not hold in the graded setting. In fact, we have

\begin{proposition}\label{gr_NS}
Suppose that two central simple algebras, $\mathcal{A}$ and $\mathcal{B}$, are graded by an abelian group $G$. Let $\iota\colon\mathcal{A}\to\mathcal{B}$ and $\iota'\colon\mathcal{A}\to\mathcal{B}$ be unital embeddings of graded algebras. Then, $\iota$ and $\iota'$ are equivalent if and only if $\mathrm{Cent}_\mathcal{B}(\iota(\mathcal{A}))\cong\mathrm{Cent}_\mathcal{B}(\iota'(\mathcal{A}))$ as graded algebras.
\end{proposition}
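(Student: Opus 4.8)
The plan is to prove both directions by analyzing how the centralizer encodes the embedding. First I would set up the structural framework: since $\mathcal{A}$ and $\mathcal{B}$ are central simple and $G$-graded, the graded Wedderburn Theorem lets me write $\mathcal{A}\cong\mathrm{M}_x(\mathcal{D})$ and $\mathcal{B}\cong\mathrm{M}_y(\mathcal{E})$ for graded-division algebras $\mathcal{D},\mathcal{E}$. A unital embedding $\iota\colon\mathcal{A}\to\mathcal{B}$ makes $\mathcal{B}$ into a graded left $\mathcal{A}$-module (via $\iota$) and a graded right module over the centralizer $\mathrm{Cent}_\mathcal{B}(\iota(\mathcal{A}))$; the double centralizer theorem, in its graded form, should give $\mathcal{B}\cong\iota(\mathcal{A})\otimes\mathrm{Cent}_\mathcal{B}(\iota(\mathcal{A}))$ as graded algebras (using that both are central simple over $\mathbb{F}$ and $G$ is abelian so the tensor-product grading behaves well). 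The centralizer is thus a $G$-graded central simple algebra whose Brauer class is forced by $[\mathcal{B}]=[\mathcal{A}][\mathrm{Cent}_\mathcal{B}(\iota(\mathcal{A}))]$ in $\mathrm{Br}_G(\mathbb{F})$.

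The forward direction is the easy one: if $\iota'=\mathrm{Int}(b)\iota$ for a homogeneous invertible $b\in\mathcal{B}$, then conjugation by $b$ carries $\iota(\mathcal{A})$ onto $\iota'(\mathcal{A})$, hence carries $\mathrm{Cent}_\mathcal{B}(\iota(\mathcal{A}))$ onto $\mathrm{Cent}_\mathcal{B}(\iota'(\mathcal{A}))$; since $\mathrm{Int}(b)$ is degree-preserving (as $G$ is abelian) this is an isomorphism of graded algebras. For the converse, I would argue that an abstract graded isomorphism $\theta\colon\mathrm{Cent}_\mathcal{B}(\iota(\mathcal{A}))\to\mathrm{Cent}_\mathcal{B}(\iota'(\mathcal{A}))$ can be combined with the canonical graded isomorphism $\iota(\mathcal{A})\cong\mathcal{A}\cong\iota'(\mathcal{A})$ (namely $\iota'\circ\iota^{-1}$ on the image) to produce, via the double-centralizer decompositions $\mathcal{B}\cong\iota(\mathcal{A})\otimes\mathrm{Cent}_\mathcal{B}(\iota(\mathcal{A}))$ and $\mathcal{B}\cong\iota'(\mathcal{A})\otimes\mathrm{Cent}_\mathcal{B}(\iota'(\mathcal{A}))$, a graded automorphism $\Psi$ of $\mathcal{B}$ whose restriction to $\iota(\mathcal{A})$ is precisely $\iota'\circ\iota^{-1}$. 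Then I apply the graded Noether--Skolem Theorem quoted just before the statement: $\Psi=\mathrm{Int}(b)$ for some homogeneous invertible $b\in\mathcal{B}$, and evaluating on $\iota(\mathcal{A})$ gives $\mathrm{Int}(b)\circ\iota=\iota'$, i.e.\ $\iota$ and $\iota'$ are equivalent.

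The main obstacle I anticipate is justifying that the decomposition $\mathcal{B}\cong\iota(\mathcal{A})\otimes\mathrm{Cent}_\mathcal{B}(\iota(\mathcal{A}))$ holds \emph{as graded algebras} and, more delicately, that the abstract graded isomorphism $\theta$ of centralizers can be upgraded to one that is compatible with this tensor decomposition so as to assemble into a genuine graded automorphism of $\mathcal{B}$. The subtlety is that $\theta$ together with $\iota'\circ\iota^{-1}$ defines a map on the tensor product $\iota(\mathcal{A})\otimes\mathrm{Cent}_\mathcal{B}(\iota(\mathcal{A}))$, but I must verify this map lands in $\mathcal{B}$ consistently with the second decomposition; this amounts to checking that the two graded tensor factorizations of $\mathcal{B}$ are interchanged compatibly, which should follow from the uniqueness built into the double centralizer theorem and the fact that both $\iota(\mathcal{A})$ and $\iota'(\mathcal{A})$ have the same Brauer-class complement. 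I would handle this by noting that multiplication $\iota(\mathcal{A})\otimes\mathrm{Cent}_\mathcal{B}(\iota(\mathcal{A}))\to\mathcal{B}$ is a graded isomorphism, and that applying $\iota'\circ\iota^{-1}\otimes\theta$ followed by the multiplication map for the primed decomposition gives the desired $\Psi$; the degree-preserving property is automatic since all constituent maps are graded and $G$ is abelian.
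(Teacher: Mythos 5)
Your proposal is correct and follows essentially the same route as the paper: the forward direction via conjugation by a homogeneous invertible element, and the converse by using the Double Centralizer Theorem to write $\mathcal{B}$ as the graded tensor products $\iota(\mathcal{A})\otimes\mathrm{Cent}_\mathcal{B}(\iota(\mathcal{A}))$ and $\iota'(\mathcal{A})\otimes\mathrm{Cent}_\mathcal{B}(\iota'(\mathcal{A}))$, assembling the automorphism $\Psi=\tau'(\eta\otimes\theta)\tau^{-1}$, and invoking the graded Noether--Skolem Theorem. The ``obstacle'' you flag is not a real one, and you resolve it exactly as the paper does: since both multiplication maps $\tau,\tau'$ are graded isomorphisms, the composite is automatically a well-defined graded automorphism of $\mathcal{B}$ restricting to $\iota'\circ\iota^{-1}$ on $\iota(\mathcal{A})$.
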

\begin{proof}
Let $\mathcal{C}$ and $\mathcal{C}'$ be the above centralizers, which are graded subalgebras of $\mathcal{B}$ (since $G$ is abelian). If an automorphism $\psi$ of the graded algebra $\mathcal{B}$ satisfies $\iota'=\psi\iota$, then it maps $\iota(\mathcal{A})$ onto $\iota'(\mathcal{A})$ and, therefore, $\mathcal{C}$ onto $\mathcal{C}'$, so $\mathcal{C}\cong\mathcal{C}'$. Conversely, suppose $\theta\colon\mathcal{C}\to\mathcal{C}'$ is an isomorphism of $G$-graded algebras. 
By the Double Centralizer Theorem, the multiplication of $\mathcal{B}$ gives an isomorphism $\tau\colon\iota(\mathcal{A})\otimes\mathcal{C}\to\mathcal{B}$. 
Clearly, this is an isomorphism of $G$-graded algebras. Similarly, we get $\tau'\colon\iota'(\mathcal{A})\otimes\mathcal{C}'\to\mathcal{B}$. 
Let $\eta\colon\iota(\mathcal{A})\to\iota'(\mathcal{A})$ be the unique isomorphism such that $\iota'=\eta\iota$.
Then $\psi:=\tau'(\eta\otimes\theta)\tau^{-1}$ is an automorphism of $\mathcal{B}$ as a graded algebra and satisfies $\iota'=\psi\iota$.
Finally, by the graded version of the Noether--Skolem Theorem, we have $\psi=\mathrm{Int}(b)$ for some homogeneous invertible element $b\in\mathcal{B}$. 
\end{proof}

\begin{definition}\label{df:lim_as_prod}
Let $\mathcal{A}_i$ be central simple algebras endowed with $G$-gradings. A direct limit of unital embeddings,
\[
\mathcal{A}_{0}\over{\iota_1}{\longrightarrow}\mathcal{A}_{1}\over{\iota_2}{\longrightarrow}\mathcal{A}_{2}\over{\iota_3}{\longrightarrow}\cdots,
\]
will be written $\mathcal{A}_{0}\otimes\bigotimes_{i=1}^\infty\mathcal{C}_i$, where $\mathcal{C}_i\cong\mathrm{Cent}_{\mathcal{A}_i}(\iota_{i-1}(\mathcal{A}_{i-1}))$.
(In view of \Cref{lm:equiv_embeddings} and \Cref{gr_NS}, the limit depends only on the graded-isomorphism classes of $\mathcal{C}_i$.)
\end{definition}

In particular, for a unital embedding $\iota:\mathrm{M}_x(\mathbb{F})\to\mathrm{M}_y(\mathbb{F})$, the centralizer $\mathcal{C}=\mathrm{Cent}_{\mathrm{M}_y(\mathbb{F})}(\iota(\mathrm{M}_x(\mathbb{F})))$ satisfies $\mathrm{M}_y(\mathbb{F})\cong\mathrm{M}_x(\mathbb{F})\otimes\mathcal{C}$ and, hence, represents the identity element of $\mathrm{Br}_G(\mathbb{F})$: $\mathcal{C}\cong\mathrm{M}_a(\mathbb{F})$ for some element $a\in\mathbb{Z}_{\ge0}G$, which will be called a \emph{label} of $\iota$. It is determined up to shift (i.e., multiplication by an element of $G$) and may be chosen to satisfy $xa=y$. Conversely, any element $a\in\mathbb{Z}_{\ge0}G$ satisfying $xa=y$ determines an embedding $\mathrm{M}_x(\mathbb{F})\to\mathrm{M}_y(\mathbb{F})$ up to equivalence. 
%In other words, $\mathrm{Aut}^G(\mathrm{M}_y(\mathbb{F}))$-orbits of unital homomorphisms $\mathrm{M}_x(\mathbb{F})\to\mathrm{M}_y(\mathbb{F})$ are in bijection with $G$-orbits of $a\in\mathbb{Z}_{\ge0}G$ such that $xa=y$.

It turns out that the functor $K_0^\mathrm{gr}$ captures precisely the labels of embeddings.
To see this, we need the following realization of $K_0^\mathrm{gr}$ for graded matrix algebras.
\begin{lemma}\label{K_0_simple}
Let $G$ be an abelian group and $0\neq x=\sum_{g\in G}x_gg\in\mathbb{Z}_{\ge0}G$. Then,
\[
K_0^\mathrm{gr}(\mathrm{M}_x(\mathbb{F}))\cong(\mathbb{Z}G,\mathbb{Z}_{\ge0}G,\bar{x}),
\]
where $\bar{x}:=\sum_{g\in G}x_gg^{-1}$. This isomorphism is unique up to shift.
\end{lemma}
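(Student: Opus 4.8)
The plan is to realize $\mathrm{M}_x(\FF)$ as $\End_\FF(V)$, where $V=\FF^n$ (with $n=|x|$) is the natural column module, graded so that the $i$-th standard basis vector $v_i$ has degree $g_i$ for a tuple $\gamma=(g_1,\dots,g_n)$ realizing $x$; this is consistent with $\deg E_{ij}=g_ig_j^{-1}$. Since $\mathrm{M}_x(\FF)$ is a finite-dimensional graded-simple algebra, it is graded-semisimple, so every finitely generated graded module is a finite direct sum of graded-simple modules and is automatically projective. Every graded-simple module is a shift of the single column module $V$: it is a graded quotient of a free module, hence a summand of a finite direct sum of shifts $V^{[g]}$. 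Consequently $K_0^\mathrm{gr}(\mathrm{M}_x(\FF))^+$ is the free commutative monoid on the classes $[V^{[g]}]$, $g\in G$, and $K_0^\mathrm{gr}(\mathrm{M}_x(\FF))$ is the corresponding free abelian group.

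The key step is to show that the shifts $V^{[g]}$, $g\in G$, are pairwise non-isomorphic, so that $[V^{[g]}]\mapsto g$ identifies a $\mathbb{Z}$-basis of $K_0^\mathrm{gr}(\mathrm{M}_x(\FF))$ with a $\mathbb{Z}$-basis of $\mathbb{Z}G$. Any graded homomorphism $V^{[g]}\to V^{[h]}$ is in particular an ungraded endomorphism of $V$, hence multiplication by some $\lambda\in\FF$, since $\End_{\mathrm{M}_x(\FF)}(V)\cong\FF$. If it is nonzero and graded, then it must send $(V^{[g]})_k=V_{kg^{-1}}$ into $(V^{[h]})_k=V_{kh^{-1}}$ for all $k$; writing $m=kg^{-1}$, this reads $V_m\subseteq V_{mgh^{-1}}$ for all $m$, which (as $V\neq 0$ and the sum is direct) forces $gh^{-1}=e$. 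This is precisely where the hypothesis that the coefficient algebra is $\FF$, i.e.\ that the graded-division part has trivial support, is used: it makes $G$ act freely on the set of graded-simple modules. The $\mathbb{Z}G$-module structure then matches because $(V^{[g]})^{[h]}=V^{[gh]}$, so the shift by $h$ corresponds to multiplication by $h$ in $\mathbb{Z}G$, and the positive cone corresponds to $\mathbb{Z}_{\ge0}G$.

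It remains to compute the order-unit $[\mathrm{M}_x(\FF)]$. I would decompose the left regular module into its columns $C_j=\mathrm{M}_x(\FF)E_{jj}=\mathrm{Span}\{E_{ij}\mid i\}$, each isomorphic to $V$ as a module via $E_{ij}\mapsto v_i$. Comparing $\deg E_{ij}=g_ig_j^{-1}$ with $\deg v_i=g_i$ shows that this isomorphism identifies $C_j$ with $V^{[g_j^{-1}]}$. Therefore
\[
[\mathrm{M}_x(\FF)]=\sum_{j=1}^n[V^{[g_j^{-1}]}]\longmapsto\sum_{j=1}^n g_j^{-1}=\sum_{g\in G}x_g g^{-1}=\bar x,
\]
as claimed. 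For uniqueness up to shift, note that the identification $[V^{[g]}]\mapsto g$ depends on designating $V=V^{[e]}$ as the graded-simple module corresponding to $e\in\mathbb{Z}G$; since $G$ permutes the classes $\{[V^{[g]}]\}$ freely and transitively, any other choice multiplies the isomorphism, and hence the image $\bar x$ of the order-unit, by a fixed element of $G$. Equivalently, the only automorphisms of the ordered $\mathbb{Z}G$-module $(\mathbb{Z}G,\mathbb{Z}_{\ge0}G)$ are the shifts (multiplication by a trivial unit $g\in G$).

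The main obstacle is the second step, establishing that the shifts $V^{[g]}$ are pairwise non-isomorphic and exhaust the graded-simple modules; everything else (graded-semisimplicity, the order-unit computation, and the bookkeeping for uniqueness) is routine once this free and transitive $G$-action on the graded-simple modules is in hand.
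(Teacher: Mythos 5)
Your proof is correct and follows essentially the same route as the paper's: the same graded column module $\mathcal{V}$, the same identification of all graded-simple modules with its shifts (with $G$ acting freely), and the same computation of the order-unit via the decomposition of the regular module, which the paper phrases as $[\mathcal{V}\otimes\mathcal{V}^\ast]=\bar{x}[\mathcal{V}]$ rather than listing the columns $C_j\cong\mathcal{V}^{[g_j^{-1}]}$ explicitly. You in fact supply more detail than the paper on the two points it leaves unproved (pairwise non-isomorphism of the shifts via Schur's lemma, and the identification of the order-preserving $\mathbb{Z}G$-module automorphisms with the trivial units).
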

\begin{proof}
Let $n=|x|$ and pick an $n$-tuple $(g_1,\ldots,g_n)$ corresponding to $x$. If we define a $G$-grading on $\mathbb{F}^n$ by assigning degree $g_i$ to the $i$-th standard basis element, we get a graded $\mathrm{M}_x(\mathbb{F})$-module, which we denote by $\mathcal{V}=\mathcal{V}_x$.
%$\mathrm{M}_x(\mathbb{F})\cong\End_\mathbb{F}(\mathcal{V})$, so 
In fact, this is the unique graded-simple $\mathrm{M}_x(\mathbb{F})$-module up to isomorphism and shift. Hence, any finitely generated graded $\mathrm{M}_x(\mathbb{F})$-module is projective and a finite direct sum of copies of shifts of $\mathcal{V}$. Moreover, $\mathcal{V}^{[g]}\cong\mathcal{V}^{[h]}$ if and only if $g=h$. Thus, there is an isomorphism of ordered $\mathbb{Z}G$-modules $\mathbb{Z}G\to K_0^\mathrm{gr}(\mathrm{M}_x(\mathbb{F}))$ that sends $1\mapsto \mathcal{V}$. The distinguished element corresponds to the regular representation $\mathrm{M}_x(\mathbb{F})$, which is
$$
[\mathrm{M}_x(\mathbb{F})]=[\mathcal{V}\otimes\mathcal{V}^\ast]
=[\bigoplus_{g\in G}x_g\mathcal{V}^{[g^{-1}]}]=\bar{x}[\mathcal{V}],
$$
where $\mathcal{V}^\ast=\Hom_\mathbb{F}(\mathcal{V},\mathbb{F})$. Hence, $[\mathrm{M}_x(\mathbb{F})]$ corresponds to $\bar{x}$.
\end{proof}

\begin{lemma}\label{K_0_morphism}
Let $\mathcal{A}=\mathrm{M}_x(\mathbb{F})$ and $\mathcal{B}=\mathrm{M}_y(\mathbb{F})$. If $\iota:\mathcal{A}\to\mathcal{B}$ is a unital embedding with label $a$, then the morphism of triples $K_0^\mathrm{gr}(\iota):K_0^\mathrm{gr}(\mathcal{A})\to K_0^\mathrm{gr}(\mathcal{B})$ corresponds to the multiplication by $\bar{a}$ in the realization of \Cref{K_0_simple}.
\end{lemma}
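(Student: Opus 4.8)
The plan is to use the standard description of the functorial map: for a unital homomorphism $\iota\colon\mathcal{A}\to\mathcal{B}$ of graded algebras, the induced morphism $K_0^\mathrm{gr}(\iota)$ sends the class of a finitely generated graded projective left $\mathcal{A}$-module $P$ to the class of the induced module $\mathcal{B}\otimes_\mathcal{A}P$, where $\mathcal{B}$ is viewed as a $(\mathcal{B},\mathcal{A})$-bimodule through $\iota$. Since $K_0^\mathrm{gr}(\iota)$ is a $\mathbb{Z}G$-module homomorphism and, under the identification of \Cref{K_0_simple}, the module $K_0^\mathrm{gr}(\mathcal{A})\cong\mathbb{Z}G$ is generated over $\mathbb{Z}G$ by the class $[\mathcal{V}_x]$ corresponding to $1$, it will suffice to compute the image of $[\mathcal{V}_x]$ and then to verify that the resulting map respects the positive cone and the order-unit.

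The core of the argument is to identify the induced module $\mathcal{B}\otimes_\mathcal{A}\mathcal{V}_x$ as a graded left $\mathcal{B}$-module. Writing $\mathcal{C}=\mathrm{Cent}_\mathcal{B}(\iota(\mathcal{A}))\cong\mathrm{M}_a(\mathbb{F})$, the Double Centralizer Theorem (as already used in the proof of \Cref{gr_NS}) gives a graded isomorphism $\iota(\mathcal{A})\otimes\mathcal{C}\to\mathcal{B}$ via multiplication, under which the right $\mathcal{A}$-action on $\mathcal{B}$ through $\iota$ becomes right multiplication on the first tensor factor. I would use this to obtain a graded isomorphism of left $\mathcal{B}$-modules
\[
\mathcal{B}\otimes_\mathcal{A}\mathcal{V}_x\;\cong\;\bigl(\iota(\mathcal{A})\otimes\mathcal{C}\bigr)\otimes_{\iota(\mathcal{A})}\mathcal{V}_x\;\cong\;\mathcal{C}\otimes_\mathbb{F}\mathcal{V}_x,
\]
where $\mathcal{C}$ carries its left regular action and $\mathcal{V}_x$ its simple $\mathcal{A}$-module structure.

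Next I would decompose the right-hand side using facts from \Cref{K_0_simple}. The left regular module $\mathcal{C}$ satisfies $[\mathcal{C}]=\bar a[\mathcal{V}_a]=\sum_{g\in G} a_g[\mathcal{V}_a^{[g^{-1}]}]$. Tensoring over $\mathbb{F}$ with $\mathcal{V}_x$ and using that the unique graded-simple module over $\mathrm{M}_a(\mathbb{F})\otimes\mathrm{M}_x(\mathbb{F})\cong\mathrm{M}_{ax}(\mathbb{F})$ is $\mathcal{V}_a\otimes\mathcal{V}_x\cong\mathcal{V}_{ax}=\mathcal{V}_y$ (recall that the label may be chosen so that $ax=y$), together with the compatibility $(\mathcal{V}_a\otimes\mathcal{V}_x)^{[g^{-1}]}\cong\mathcal{V}_a^{[g^{-1}]}\otimes\mathcal{V}_x$ of shifts with tensoring, I obtain
\[
[\mathcal{B}\otimes_\mathcal{A}\mathcal{V}_x]=\sum_{g\in G} a_g[\mathcal{V}_y^{[g^{-1}]}]=\bar a[\mathcal{V}_y].
\]
Thus $K_0^\mathrm{gr}(\iota)$ sends $1\mapsto\bar a$, so it is multiplication by $\bar a$. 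To finish, I note that since $\bar a\in\mathbb{Z}_{\ge0}G$ this map preserves the positive cone $\mathbb{Z}_{\ge0}G$, and it sends the order-unit $\bar x$ to $\bar a\bar x=\overline{ax}=\bar y$, which confirms that it is a morphism of triples.

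The step requiring the most care is the graded identification $\mathcal{B}\otimes_\mathcal{A}\mathcal{V}_x\cong\mathcal{C}\otimes_\mathbb{F}\mathcal{V}_x$: one must track the $(\mathcal{B},\mathcal{A})$-bimodule structure through the multiplication isomorphism of the Double Centralizer Theorem and check that the $G$-grading is preserved, and then keep careful account of the shifts in the subsequent decomposition. Once these bookkeeping points are settled, the rest reduces to the module-theoretic facts already recorded in \Cref{K_0_simple}.
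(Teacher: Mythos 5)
Your proposal is correct and follows essentially the same route as the paper: both compute $K_0^\mathrm{gr}(\iota)$ on the generator $[\mathcal{V}_x]$ by identifying $\mathcal{B}\otimes_\mathcal{A}\mathcal{V}_x$ with $\mathrm{M}_a(\mathbb{F})\otimes\mathcal{V}_x$ via $\mathcal{B}\cong\mathrm{M}_a(\mathbb{F})\otimes\mathcal{A}$, and then decompose the regular module of $\mathrm{M}_a(\mathbb{F})$ as $\bar{a}[\mathcal{V}_a]$ to conclude $[\mathcal{B}\otimes_\mathcal{A}\mathcal{V}_x]=\bar{a}[\mathcal{V}_y]$. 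The extra bookkeeping you flag (the bimodule structure through the Double Centralizer isomorphism and the shift/order-unit checks) is left implicit in the paper but is handled correctly in your write-up.
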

\begin{proof}
Using the notation of \Cref{K_0_simple}, we have $K_0^\mathrm{gr}(\mathcal{A})\cong(\mathbb{Z}G,\mathbb{Z}_{\ge0}G,\bar{x})$ and $K_0^\mathrm{gr}(\mathcal{B})\cong(\mathbb{Z}G,\mathbb{Z}_{\ge0}G,\bar{y})$. Now, since $\mathcal{B}\cong \mathrm{M}_a(\mathbb{F})\otimes\mathcal{A}$, we obtain
$$
K_0^\mathrm{gr}(\iota)([\mathcal{V}_x]):=[\mathcal B\otimes_\mathcal{A}\mathcal{V}_x]=[\mathrm{M}_a(\mathbb{F})\otimes\mathcal{V}_x]=[\mathcal{V}_a\otimes\mathcal{V}_a^\ast\otimes\mathcal{V}_x]=\bar{a}[\mathcal{V}_a\otimes\mathcal{V}_x]=\bar{a}[\mathcal{V}_y].
$$
In the realization of \Cref{K_0_simple}, this means $K_0^\mathrm{gr}(\iota)(1)=\bar{a}$.
\end{proof}
We shall use the following notation for a label of an embedding: $\mathcal{A}\under{a}{\to}\mathcal{B}$, which induces $K_0^\mathrm{gr}(\mathcal{A})\over{\bar{a}}{\to}K_0^\mathrm{gr}(\mathcal{B})$.

\begin{lemma}\label{remove_matrix}
Let $n\in\mathbb{N}$ and let $\mathcal{A}$ and $\mathcal{A}'$ be limits of sequences of matrix algebras endowed with $G$-gradings. Then $\mathrm{M}_n\otimes\mathcal{A}\cong\mathrm{M}_n\otimes\mathcal{A}'$ if and only if $\mathcal{A}\cong\mathcal{A}'$.
\end{lemma}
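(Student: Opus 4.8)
The forward implication is immediate: applying the functor $\mathrm{M}_n\otimes-$ to a graded isomorphism $\mathcal{A}\cong\mathcal{A}'$ yields $\mathrm{M}_n\otimes\mathcal{A}\cong\mathrm{M}_n\otimes\mathcal{A}'$, so the plan is to prove the converse. Write $\mathcal{A}=\lim(\mathcal{A}_j,\iota_{ji})$ and $\mathcal{A}'=\lim(\mathcal{A}'_j,\iota'_{ji})$ with all $\mathcal{A}_j,\mathcal{A}'_j$ graded matrix algebras (hence central simple and graded-simple) and all connecting maps unital embeddings. Then $\mathrm{M}_n\otimes\mathcal{A}=\lim(\mathrm{M}_n\otimes\mathcal{A}_j,\mathrm{id}\otimes\iota_{ji})$, and similarly for $\mathcal{A}'$. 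Applying \Cref{lem1} to the given isomorphism $\mathrm{M}_n\otimes\mathcal{A}\cong\mathrm{M}_n\otimes\mathcal{A}'$, I obtain subsequences and interleaving unital embeddings $\varepsilon_k\colon\mathrm{M}_n\otimes\mathcal{A}_{j_k}\to\mathrm{M}_n\otimes\mathcal{A}'_{j_k'}$ and $\varepsilon_k'\colon\mathrm{M}_n\otimes\mathcal{A}'_{j_k'}\to\mathrm{M}_n\otimes\mathcal{A}_{j_{k+1}}$ satisfying $\varepsilon_k'\varepsilon_k=\mathrm{id}\otimes\iota_{j_{k+1},j_k}$ and $\varepsilon_{k+1}\varepsilon_k'=\mathrm{id}\otimes\iota'_{j'_{k+1},j'_k}$.

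The crux is the following claim, which strips the factor $\mathrm{M}_n$ from a single embedding: \emph{if $\mathcal{R},\mathcal{S}$ are graded matrix algebras and $\varphi\colon\mathrm{M}_n\otimes\mathcal{R}\to\mathrm{M}_n\otimes\mathcal{S}$ is a unital embedding of graded algebras, then there is a unital embedding $\psi\colon\mathcal{R}\to\mathcal{S}$ with $\mathrm{Cent}_\mathcal{S}(\psi(\mathcal{R}))\cong\mathrm{Cent}_{\mathrm{M}_n\otimes\mathcal{S}}(\varphi(\mathrm{M}_n\otimes\mathcal{R}))$ as graded algebras.} To prove it, set $\mathcal{C}:=\mathrm{Cent}_{\mathrm{M}_n\otimes\mathcal{S}}(\varphi(\mathrm{M}_n\otimes\mathcal{R}))$, which is central simple and graded, so $\mathcal{C}\cong\mathrm{M}_z(\mathcal{E})$ for a graded-division algebra $\mathcal{E}$. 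By the Double Centralizer Theorem, $\mathrm{M}_n\otimes(\mathcal{R}\otimes\mathcal{C})\cong(\mathrm{M}_n\otimes\mathcal{R})\otimes\mathcal{C}\cong\mathrm{M}_n\otimes\mathcal{S}$ as graded algebras. Writing $\mathcal{R}\otimes\mathcal{C}\cong\mathrm{M}_w(\mathcal{F})$ and $\mathcal{S}\cong\mathrm{M}_y(\mathcal{D}')$ by the graded Wedderburn Theorem, this reads $\mathrm{M}_{nw}(\mathcal{F})\cong\mathrm{M}_{ny}(\mathcal{D}')$. The uniqueness part of graded Wedderburn forces $\mathcal{F}\cong\mathcal{D}'$ (so both have support $T'$) and makes the images of $nw$ and $ny$ in $\mathbb{Z}_{\ge 0}(G/T')$ coincide up to a shift; since $\mathbb{Z}(G/T')$ is torsion-free, multiplication by $n$ is injective, so I may cancel $n$ and conclude that the images of $w$ and $y$ agree up to shift. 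Hence $\mathcal{R}\otimes\mathcal{C}\cong\mathrm{M}_w(\mathcal{F})\cong\mathrm{M}_y(\mathcal{D}')\cong\mathcal{S}$, and the map $\psi\colon\mathcal{R}\to\mathcal{R}\otimes\mathcal{C}\cong\mathcal{S}$, $r\mapsto r\otimes1$, has centralizer $\cong\mathcal{C}$ by double centralizer, as required.

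Applying the claim to each $\varepsilon_k$ and $\varepsilon_k'$ yields unital embeddings $\tilde\varepsilon_k\colon\mathcal{A}_{j_k}\to\mathcal{A}'_{j_k'}$ and $\tilde\varepsilon_k'\colon\mathcal{A}'_{j_k'}\to\mathcal{A}_{j_{k+1}}$ whose image-centralizers are $G$-graded-isomorphic to those of $\varepsilon_k$ and $\varepsilon_k'$. Writing $\mathrm{Cent}(\cdot)$ for the centralizer of the image of an embedding, I use two bookkeeping facts: first, the centralizer of a composite of unital embeddings of central simple graded algebras is the tensor product of the individual centralizers (iterated Double Centralizer); second, $\mathrm{M}_n\otimes-$ does not change the centralizer, i.e. $\mathrm{Cent}_{\mathrm{M}_n\otimes\mathcal{S}}((\mathrm{id}\otimes\eta)(\mathrm{M}_n\otimes\mathcal{R}))\cong\mathrm{Cent}_\mathcal{S}(\eta(\mathcal{R}))$. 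Combining these with $\varepsilon_k'\varepsilon_k=\mathrm{id}\otimes\iota_{j_{k+1},j_k}$ gives
$$\mathrm{Cent}(\tilde\varepsilon_k'\tilde\varepsilon_k)\cong\mathrm{Cent}(\tilde\varepsilon_k)\otimes\mathrm{Cent}(\tilde\varepsilon_k')\cong\mathrm{Cent}(\varepsilon_k)\otimes\mathrm{Cent}(\varepsilon_k')\cong\mathrm{Cent}(\varepsilon_k'\varepsilon_k)\cong\mathrm{Cent}(\iota_{j_{k+1},j_k}),$$
and symmetrically $\mathrm{Cent}(\tilde\varepsilon_{k+1}\tilde\varepsilon_k')\cong\mathrm{Cent}(\iota'_{j'_{k+1},j'_k})$. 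By \Cref{gr_NS}, the composite $\tilde\varepsilon_k'\tilde\varepsilon_k$ is therefore equivalent to $\iota_{j_{k+1},j_k}$, and $\tilde\varepsilon_{k+1}\tilde\varepsilon_k'$ is equivalent to $\iota'_{j'_{k+1},j'_k}$.

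Finally, by \Cref{lm:equiv_embeddings} I may replace the connecting maps of the two (subsequenced) systems by the equivalent composites $\tilde\varepsilon_k'\tilde\varepsilon_k$ and $\tilde\varepsilon_{k+1}\tilde\varepsilon_k'$ without changing the limits $\mathcal{A}$ and $\mathcal{A}'$. With these connecting maps the interleaving diagram built from $\tilde\varepsilon_k$ and $\tilde\varepsilon_k'$ commutes on the nose, so \Cref{lem1} yields $\mathcal{A}\cong\mathcal{A}'$. The main obstacle is the emphasized claim of the second paragraph, and within it the cancellation of the factor $n$ inside $\mathbb{Z}_{\ge 0}(G/T')$ up to shift, which is precisely what permits ``dividing'' an embedding by $\mathrm{M}_n$; everything else is an organized application of the Double Centralizer Theorem, the graded Noether--Skolem Theorem (\Cref{gr_NS}), and the two isomorphism criteria (\Cref{lem1,lm:equiv_embeddings}).
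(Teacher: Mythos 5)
Your proof is correct, and its core step coincides with the paper's: the paper's first paragraph is exactly your ``crux claim'' (pass to the centralizer $\mathcal{C}$ of the image, use the Double Centralizer Theorem and the uniqueness part of the graded Wedderburn Theorem, and cancel $n$ in $\mathbb{Z}(G/T')$ using that it is torsion-free). Where you genuinely diverge is in how the single-step statement is assembled into an isomorphism of limits. The paper keeps the original connecting maps $\mathrm{id}\otimes\iota_{j_{k+1},j_k}$ and instead modifies the interleaving maps: it constructs homogeneous invertible elements $b_k', b_{k+1}$ inductively so that the conjugated maps $\mu_k=\mathrm{Int}(b_k')\eta_k$ and $\mu_k'=\mathrm{Int}(b_{k+1})\eta_k'$ make the triangles commute on the nose, and then argues by induction that each $\mu_k$ restricts to the identity on $\mathrm{M}_n$ and hence splits as $\mathrm{id}\otimes\varepsilon_k$. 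You instead strip $\mathrm{M}_n$ from each interleaving map separately, and recover commutativity only up to equivalence by comparing centralizers of composites (multiplicativity of image-centralizers under composition, which is a further application of the Double Centralizer Theorem not made explicit in the paper), invoking \Cref{gr_NS} to identify $\tilde\varepsilon_k'\tilde\varepsilon_k$ with $\iota_{j_{k+1},j_k}$ up to equivalence, and then \Cref{lm:equiv_embeddings} to replace the connecting maps without changing the limits. Your route trades the paper's explicit element-chasing induction for an invariant-based bookkeeping argument; the paper's version yields the slightly stronger conclusion that the interleaving can be realized by maps of the form $\mathrm{id}\otimes\varepsilon_k$ compatible with the \emph{original} connecting maps, but for the statement of the lemma both arguments suffice.
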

\begin{proof}
The ``if'' direction is clear. For the ``only if'' direction, we consider an embedding $\eta:\mathrm{M}_{n}\otimes\mathrm{M}_x(\mathcal{D})\to\mathrm{M}_n\otimes\mathrm{M}_y(\mathcal{D}')$, where $\mathcal{D}$ and $\mathcal{D}'$ are graded-division algebras. Let $\mathcal{C}$ be the centralizer of the image of $\eta$.
%Then, the Double Centralizer Theorem gives $\mathrm{M}_n\otimes\mathrm{M}_{y}(\mathcal{D}')\cong\eta(\mathrm{M}_n\otimes\mathrm{M}_{x}(\mathcal{D}))\otimes\mathcal{C}$, where $\mathcal{C}=\mathrm{Cent}_{\mathrm{M}_{n}\otimes\mathrm{M}_{y}(\mathcal{D}')}(\eta(\mathrm{M}_{n}\otimes\mathrm{M}_{x}(\mathcal{D})))$. 
By the graded Wedderburn Theorem, $\mathcal{C}\cong\mathrm{M}_a(\mathcal{E})$ for some $a\in\mathbb{Z}_{\geq 0}G$ and a graded-division algebra $\mathcal{E}$. Moreover, the element $[\mathcal{E}]\in\mathrm{Br}_G(\mathbb{F})$ must satisfy $[\mathcal{D}']=[\mathcal{D}][\mathcal{E}]$, i.e., $\mathcal{D}\otimes\mathcal{E}\cong\mathrm{M}_z(\mathcal{D}')$. It follows that $\mathrm{M}_{ny}(\mathcal{D}')\cong\mathrm{M}_{nxaz}(\mathcal{D}')$, so replacing $a$ by its shift if necessary, we may assume that $ny\equiv nxaz$ modulo $T':=\Supp\mathcal{D}'$. Since $n\in\mathbb{N}$ is not a zero divisor in $\mathbb{Z}(G/T')$, this gives $y\equiv xaz$ modulo $T'$ and, hence, $\mathrm{M}_y(\mathcal{D}')\cong\mathrm{M}_x(\mathcal{D})\otimes\mathcal{C}$. Let $\varepsilon:\mathrm{M}_x(\mathcal{D})\to\mathrm{M}_y(\mathcal{D}')$ be the corresponding embedding. Then, by \Cref{gr_NS}, the embedding $\eta$ is equivalent to $\mathrm{id}_{\mathrm{M}_n}\otimes\varepsilon$. 
 
Now, let $\mathcal{A}=\lim\limits_{\longrightarrow}(\mathcal{A}_j,\iota_{ji})$ and $\mathcal{A}'=\lim\limits_{\longrightarrow}(\mathcal{A}_j',\iota_{ji}')$, and suppose $\mathrm{M}_n\otimes\mathcal{A}\cong\mathrm{M}_n\otimes\mathcal{A}'$. Then, by \Cref{lem1}, we can find a sequence of embeddings
$$
\mathrm{M}_n\otimes\mathcal{A}_{j_1}\stackrel{\eta_1}{\longrightarrow}\mathrm{M}_n\otimes\mathcal{A}'_{j_1'}\stackrel{\eta_1'}{\longrightarrow}\mathrm{M}_n\otimes\mathcal{A}_{j_2}\stackrel{\eta_2}{\longrightarrow}\cdots
$$
such that $\eta_k'\eta_{k}=\mathrm{id}\otimes\iota_{j_{k+1},j_k}$ and $\eta_{k+1}\eta_k'=\mathrm{id}\otimes\iota_{j_{k+1}',j_k'}'$. The first paragraph shows that, for some homogeneous invertible element $b'_1\in\mathrm{M}_n\otimes\mathcal{A}'_{j_1'}$ and an embedding $\varepsilon_1:\mathcal{A}_{j_1}\to\mathcal{A}'_{j'_1}$, we have $\mathrm{id}\otimes\varepsilon_1=\mathrm{Int}(b'_1)\eta_1$.
Then, we define $b_k'=(\mathrm{id}\otimes\iota_{j_k',j_1'}')(b'_1)$, $b_{k+1}=\eta_k'(b'_k)^{-1}$, $\mu_k'=\mathrm{Int}(b_{k+1})\eta_k'$ and $\mu_k=\mathrm{Int}(b_k')\eta_k$. By construction, we have $\mu_1=\mathrm{id}\otimes\varepsilon_1$, $\mu_k'\mu_{k}=\mathrm{id}\otimes\iota_{j_{k+1},j_k}$ and $\mu_{k+1}\mu_k'=\mathrm{id}\otimes\iota_{j_{k+1}',j_k'}'$. It follows by induction that $\mu_k|_{\mathrm{M}_n}=\mathrm{id}$ and $\mu'_k|_{\mathrm{M}_n}=\mathrm{id}$, i.e., $\mu_k=\mathrm{id}\otimes\varepsilon_k$ and $\mu_k'=\mathrm{id}\otimes\varepsilon_k'$ for some embeddings $\varepsilon_k:\mathcal{A}_{j_k}\to\mathcal{A}_{j_k'}'$ and $\varepsilon_k':\mathcal{A}_{j_k'}'\to\mathcal{A}_{j_{k+1}}$. It follows that $\varepsilon_k'\varepsilon_k=\iota_{j_{k+1},j_k}$ and $\varepsilon_{k+1}\varepsilon_k'=\iota_{j_{k+1},j_k}'$, so  
\Cref{lem1} tells us that $\mathcal{A}\cong\mathcal{A}'$.
\end{proof}

\section{Direct limits of matrix algebras with elementary gradings}

In this section, we consider direct limits of matrix algebras endowed with elementary $G$-gradings, for a finite abelian group $G$, i.e., $G$-graded algebras of the form $\mathcal{A}\cong\lim\limits_{\longrightarrow}\mathcal{A}_i$ where $\mathcal{A}_i=\mathrm{M}_{x_i}$. We may also write this limit as $\mathrm{M}_{x_0}\otimes\bigotimes_{i=1}^\infty\mathrm{M}_{a_i}$ (see \Cref{df:lim_as_prod}),  where $a_i$ is a label of $\iota_i:\mathcal{A}_{i-1}\to\mathcal{A}_i$. The goal is to obtain an explicit realization of $K_0^\mathrm{gr}(A)$ inside the $\mathbb{Z}G$-module $\mathbb{Q}G$, which is useful to determine when two such direct limits are isomorphic (see \Cref{thm1}). Moreover, in the next section, this realization will help us see when a graded-division algebra is ``absorbed'' by $\mathcal{A}$ (as illustrated in \Cref{ex1}).

Since $G$ is assumed to be finite, the group ring $\mathbb{Q}G$ is a direct product of fields. More precisely, $\mathbb{Q}G\cong\prod_{j}\mathbb{L}_j$, where each $\mathbb{L}_j$ is a (cyclotomic) finite extension of $\mathbb{Q}$ and $j$ runs over the orbit set $\hat{G}/\Gamma$ defined by the action of the absolute Galois group $\Gamma=\mathrm{Gal}(\overline{\mathbb{Q}}/\mathbb{Q})$ on $\hat{G}\cong\Hom(G,\overline{\mathbb{Q}}^\times)$ by post-composition. Indeed, $\mathbb{Q}G$ can be identified with the subalgebra of fixed points in the algebra of all maps $\hat{G}\to\overline{\mathbb{Q}}$ under the usual action of $\Gamma$, given by $(\sigma\cdot f)(\chi):=\sigma(f(\sigma^{-1}\cdot\chi))$ for all $\sigma\in\Gamma$, $f:\hat{G}\to\overline{\mathbb{Q}}$, and $\chi\in\hat{G}$. 

We denote by $e_j\in\mathbb{Q}G$ and $\pi_j:\mathbb{Q}G\to\mathbb{L}_j$ the (primitive) idempotents and projections corresponding to this direct product decomposition. Explicitly, $\pi_j(z)=e_jz$, for all $z\in\mathbb{Q}G$, and $e_j=\sum_{\chi\in j}e_\chi$ where $e_\chi=\frac{1}{|G|}\sum_{g\in G}\chi(g)^{-1}g$.  Also, for any subset $S\subseteq\hat{G}/\Gamma$, we have an ideal $\mathbb{L}_S:=\prod_{j\in S}\mathbb{L}_j$ of $\mathbb{Q}G$ and the corresponding projection $\pi_S:\mathbb{Q}G\to\mathbb{L}_S$.

For any $z\in\mathbb{Q}G$, we let $\supp(z):=\{j\in\hat{G}/\Gamma\mid\pi_j(z)\ne0\}$.

\begin{remark}\label{rem:conjugate}
This $\supp(z)$ should not be confused with the usual concept of support of an element $z=\sum_{g\in G} z_g g$ of the group ring $\mathbb{Q}G$, which is the set $\{g\in G\mid z_g\ne 0\}$.
It is also worth noting that $\supp(\bar{z})=\supp(z)$ where bar is the automorphism of $\mathbb{Q}G$ introduced in \Cref{K_0_simple}: $\bar{z}=\sum_{g\in G} z_g g^{-1}$. Indeed, on $e_j$ this automorphism coincides with the action of complex conjugation (an element of $\Gamma$), so $\bar{e}_j=e_j$.
\end{remark}

\begin{definition}\label{Def:standard_form}
We say that a limit $\mathrm{M}_{x_0}\otimes\bigotimes_{i=1}^\infty\mathrm{M}_{a_i}$ is in \emph{standard form} if $\supp a_i$ is a fixed set $S$, independent of $i$, and $\supp x_0\subseteq S$.
\end{definition}

\begin{proposition}\label{standard_form}
Every direct limit of matrix algebras endowed with elementary gradings is isomorphic to a limit in standard form.
\end{proposition}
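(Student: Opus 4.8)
The plan is to reach the standard form by three operations on the direct system that all leave the limit unchanged: truncating initial terms, grouping consecutive embeddings (i.e. passing to a cofinal subsystem), and absorbing a single label into the starting algebra. The whole argument lives on the finite set $\Omega:=\hat{G}/\Gamma$ and rests on one multiplicative property of support: since each $\pi_j$ maps $\mathbb{Q}G$ onto the \emph{field} $\mathbb{L}_j$, we have $\pi_j(zw)=\pi_j(z)\pi_j(w)$ for nonzero $z,w\in\mathbb{Q}G$, and therefore
\[
\supp(zw)=\supp(z)\cap\supp(w).
\]
Because $x_i=x_{i-1}a_i$, a composite embedding $\mathcal{A}_i\to\mathcal{A}_j$ carries the label $a_{i+1}\cdots a_j$ (this is also what \Cref{K_0_morphism} records), so grouping consecutive embeddings multiplies the corresponding labels and, by the displayed identity, intersects their supports. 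I also note that a label is defined only up to shift by some $g\in G$; as $g$ is a unit of $\mathbb{Q}G$ with $\pi_j(g)\neq 0$ for every $j$, shifting does not change $\supp$, so all the operations below are unambiguous.

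First I would locate the eventual support. The tail intersections $\bigcap_{i\ge k}\supp a_i$ form a non-decreasing family of subsets of the finite set $\Omega$, hence stabilize; let $S$ be their common eventual value and fix $M$ with $\bigcap_{i>M}\supp a_i=S$. Truncating the system at $\mathcal{A}_M$ does not change the limit, so I may assume that $\supp a_i\supseteq S$ for every $i$, while for each $\omega\in\Omega\setminus S$ there remain infinitely many indices $i$ with $\omega\notin\supp a_i$ (this last property is exactly the statement $\omega\notin S$).

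Next I would regroup the labels into consecutive finite blocks, each recovering exactly $S$. Since $\Omega\setminus S$ is finite and each of its elements is omitted by infinitely many labels, a greedy construction yields indices $M=m_0<m_1<m_2<\cdots$ such that, for every block index $j$ and every $\omega\in\Omega\setminus S$, some label with index in $\{m_{j-1}+1,\dots,m_j\}$ omits $\omega$. Passing to this (cofinal) subsystem, the new labels $c_j:=a_{m_{j-1}+1}\cdots a_{m_j}$ satisfy $\supp c_j\supseteq S$, because every factor does, and $\omega\notin\supp c_j$ for each $\omega\notin S$, because some factor omits it; hence $\supp c_j=S$ for all $j$. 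Finally, to force $\supp x_0\subseteq S$, I would absorb the first label into the starting algebra: using $\mathrm{M}_{x_0}\otimes\mathrm{M}_{c_1}\cong\mathrm{M}_{x_0c_1}$ and discarding the initial term, the limit becomes $\mathrm{M}_{x_0c_1}\otimes\bigotimes_{j\ge 1}\mathrm{M}_{c_{j+1}}$, in which the new starting multiset $x_0c_1$ has support $\supp x_0\cap S\subseteq S$ and every remaining label still has support $S$. This is the desired standard form.

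The only genuinely delicate step is the blocking, and its correctness turns entirely on the two facts isolated at the outset: that $\supp$ sends products to intersections, and that $\Omega=\hat{G}/\Gamma$ is finite (which is precisely where the hypothesis that $G$ is finite enters). Everything else—truncation, passage to a cofinal subsystem, and absorption of one label—are elementary manipulations of direct systems that preserve the limit, so no appeal to \Cref{lem1} beyond cofinality is required.
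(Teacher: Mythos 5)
Your proof is correct and follows essentially the same route as the paper's: stabilize the tail intersections $S_n=\bigcap_{i\ge n}\supp(a_i)$ using finiteness of $\hat{G}/\Gamma$, truncate, and group consecutive labels into blocks whose product has support exactly $S$ (via $\supp(zw)=\supp(z)\cap\supp(w)$, which holds because each $\pi_j$ lands in a field). Your final step of absorbing $c_1$ into the initial term to force $\supp x_0\subseteq S$ is a welcome explicit treatment of a point the paper's proof passes over silently.
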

\begin{proof}
Consider $\mathrm{M}_{x_0}\otimes\bigotimes_{i=1}^\infty\mathrm{M}_{a_i}$, i.e., the direct limit of the following sequence:
$$
\mathrm{M}_{x_0}\under{a_1}{\longrightarrow}\mathrm{M}_{x_{1}}\under{a_2}{\longrightarrow}\mathrm{M}_{x_{2}}\under{a_3}{\longrightarrow}\cdots,
$$
where $x_i=x_0\prod_{j=1}^ia_j$.
Let $S_0=\bigcap_{i=0}^\infty\supp(x_i)$, $S_n=\bigcap_{i=n}^{\infty}\supp(a_i)$, for $n\ge1$, and $S=\bigcup_{n=1}^\infty S_n$. Since  $\supp x_i=\supp(x_0)\cap\bigcap_{j=1}^i\supp(a_j)$, we have 
\[
S_0=\supp(x_0)\cap S_1\subseteq S_1\subseteq S_2\subseteq\cdots\subseteq S.
\]
These are finite sets, so there exists $N$ such that $S_n=S$, for all $n\ge N$. Skipping the terms before $x_{N-1}$ in the sequence, we may replace $x_0$ with $x'_0:=x_{N-1}$ and assume that $S_1=S$. Then, for each $n$, there exists $m>n$ such that $\supp\prod_{i=n+1}^ma_i=S$, so we may pass to a subsequence
$$
\mathrm{M}_{x_{0}'}\under{a_1'}{\longrightarrow}\mathrm{M}_{x_{i_1}}\under{a_2'}{\longrightarrow}\mathrm{M}_{x_{i_2}}\under{a_3'}{\longrightarrow}\cdots
$$
such that $\supp(a_i')=S$ $\forall i\ge1$.
\end{proof}
We will now see that the sets $S$ and $S_0$ introduced above depend only on the isomorphism class of the direct limit.
\begin{proposition}\label{S_invariant}
If $\mathrm{M}_{x_0}\otimes\bigotimes_{i=1}^\infty\mathrm{M}_{a_i}\cong \mathrm{M}_{x_0'}\otimes\bigotimes_{i=1}^\infty\mathrm{M}_{a_i'}$, then $S_0=S_0'$ and $S=S'$.
\end{proposition}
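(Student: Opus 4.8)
The plan is to show that both $S$ and $S_0$ are encoded in the $K_0^\mathrm{gr}$-triple of the limit, which is an isomorphism invariant by functoriality of $K_0^\mathrm{gr}$. Writing $\mathcal{A}=\mathrm{M}_{x_0}\otimes\bigotimes_{i=1}^\infty\mathrm{M}_{a_i}$ as the limit of $\mathrm{M}_{x_0}\under{a_1}{\longrightarrow}\mathrm{M}_{x_1}\under{a_2}{\longrightarrow}\cdots$ and invoking \Cref{K_0_simple} and \Cref{K_0_morphism} together with the fact that $K_0^\mathrm{gr}$ commutes with direct limits \cite[Theorem 3.2.4]{Haz}, I first identify $K_0^\mathrm{gr}(\mathcal{A})$ with the direct limit of the multiplication system $\mathbb{Z}G\over{\bar a_1}{\longrightarrow}\mathbb{Z}G\over{\bar a_2}{\longrightarrow}\cdots$, whose order-unit $u$ is the image of $\bar x_0$ from the initial stage. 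Tensoring with $\mathbb{Q}$ over $\mathbb{Z}$, I pass to the $\mathbb{Q}G$-module $V:=\mathbb{Q}\otimes_{\mathbb{Z}}K_0^\mathrm{gr}(\mathcal{A})$, realized as the direct limit of $\mathbb{Q}G\over{\bar a_1}{\longrightarrow}\mathbb{Q}G\over{\bar a_2}{\longrightarrow}\cdots$.

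Using the decomposition $\mathbb{Q}G\cong\prod_j\mathbb{L}_j$ and the fact that direct limits commute with finite products, I decompose $V\cong\prod_j V_j$, where $V_j$ is the limit of $\mathbb{L}_j\over{c_i}{\longrightarrow}\mathbb{L}_j$ with $c_i:=\pi_j(\bar a_i)$. Since each $\mathbb{L}_j$ is a field, $c_i$ is either zero or invertible; hence $V_j\ne0$ precisely when only finitely many $c_i$ vanish, i.e.\ when $\pi_j(a_i)\ne0$ for all large $i$, which is exactly the condition $j\in S$. Thus $S=\{j\mid V_j\ne0\}$ is the support of the $\mathbb{Q}G$-module $V$, and in particular depends only on the $\mathbb{Z}G$-module $K_0^\mathrm{gr}(\mathcal{A})$.

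Next I locate the order-unit. For $j\in S$, fix $N$ with $c_i\ne0$ for all $i\ge N$; then $V_j\cong\mathbb{L}_j$ via the value at stage $N$, and $u$ is represented there by $\pi_j(\bar x_N)$ (since $u$ equals $\bar x_N$ at stage $N$ under the limit identifications). Because $\pi_j$ is a ring homomorphism into a field, $\pi_j(\bar x_N)=\pi_j(\bar x_0)\,c_1\cdots c_N$, and by \Cref{rem:conjugate} its vanishing is governed by $\supp(x_N)$. Since $c_i\ne0$, equivalently $j\in\supp(a_i)$, for all $i\ge N$, the truth value of $j\in\supp(x_i)=\supp(x_0)\cap\bigcap_{k\le i}\supp(a_k)$ is constant for $i\ge N-1$ and hence agrees with membership in the stabilized intersection $S_0=\bigcap_i\supp(x_i)$. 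Therefore the $j$-component of $u$ is nonzero iff $j\in S_0$, giving $S_0=\{j\mid\pi_j(u)\ne0\}$, the support of $u$ as an element of $V$; for $j\notin S$ both sides are empty, consistent with $S_0\subseteq S$.

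The main obstacle is precisely this order-unit computation: one must verify that when $j\in S$ but $j\notin S_0$ (so some early $c_i$ vanishes while the tail does not), the product representation $\pi_j(\bar x_N)=\pi_j(\bar x_0)\,c_1\cdots c_N$ indeed forces $\pi_j(u)=0$, matching $j\notin S_0$. Granting this, the conclusion is immediate: an isomorphism of triples $K_0^\mathrm{gr}(\mathcal{A})\to K_0^\mathrm{gr}(\mathcal{A}')$ is a $\mathbb{Z}G$-module isomorphism sending $[\mathcal{A}]$ to $[\mathcal{A}']$, so tensoring with $\mathbb{Q}$ yields a $\mathbb{Q}G$-module isomorphism $V\to V'$ carrying $u$ to $u'$. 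As it commutes with the central idempotents $e_j$, it respects the decomposition, so $V_j\ne0\Leftrightarrow V_j'\ne0$ and $\pi_j(u)\ne0\Leftrightarrow\pi_j(u')\ne0$. Hence $S=S'$ and $S_0=S_0'$.
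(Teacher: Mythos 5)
Your argument is correct, but it is not the route the paper takes for this proposition. The paper's proof is purely combinatorial: by \Cref{lem1} the two sequences can be interleaved into a single chain $\mathrm{M}_{x_0}\to\mathrm{M}_{x_0'}\to\mathrm{M}_{x_1}\to\mathrm{M}_{x_1'}\to\cdots$, of which both original sequences are subsequences, and since $S$ and $S_0$ are visibly unchanged under passing to a subsequence (as in the proof of \Cref{standard_form}), they coincide. Your proof instead realizes $K_0^\mathrm{gr}(\mathcal{A})$ as the limit of the multiplication system $\mathbb{Z}G\over{\bar a_1}{\to}\mathbb{Z}G\over{\bar a_2}{\to}\cdots$ via \Cref{K_0_simple} and \Cref{K_0_morphism}, rationalizes, and reads off $S$ as the support of the $\mathbb{Q}G$-module and $S_0$ as the support of the order-unit; the verifications (including the ``obstacle'' about $j\in S\setminus S_0$, which is immediate from $\pi_j(\bar x_N)=\pi_j(\bar x_0)c_1\cdots c_N$ and $\supp(x_i)=\supp(x_0)\cap\bigcap_{k\le i}\supp(a_k)$) all check out. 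This is essentially the ``alternative proof'' the paper itself sketches in \Cref{SS_0}, though your version works directly with the defining sequence rather than through \Cref{K_0_ultraelementary}, so it needs no standard form and creates no circularity. The trade-off: the paper's argument is shorter and needs only \Cref{lem1}; yours is heavier but yields the stronger statement that $S$ and $S_0$ are invariants of the triple $(K_0^\mathrm{gr}(\mathcal{A}),K_0^\mathrm{gr}(\mathcal{A})^+,[\mathcal{A}])$ alone, which is the content later exploited in \Cref{thm}.
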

\begin{proof}
Passing to subsequences, we may assume by \Cref{lem1} that we have
\[
\begin{tikzcd}
\mathrm{M}_{x_0}\arrow{d}{\varepsilon_0}\arrow{r}{}&\mathrm{M}_{x_1}\arrow{d}{\varepsilon_1}\arrow{r}{}&\mathrm{M}_{x_2}\arrow{d}{\varepsilon_2}\arrow{r}{}&\cdots\\%
\mathrm{M}_{x_0'}\arrow{r}{}\arrow{ur}{\varepsilon_0'}&\mathrm{M}_{x_1'}\arrow{r}{}\arrow{ur}{\varepsilon_1'}&\mathrm{M}_{x_2'}\arrow{r}{}\arrow{ur}{\varepsilon_2'}&\cdots
\end{tikzcd}
\]
It follows that $\mathrm{M}_{x_0}\longrightarrow \mathrm{M}_{x_1}\longrightarrow\cdots$ and $\mathrm{M}_{x_0'}\longrightarrow \mathrm{M}_{x_1'}\longrightarrow\cdots$ are subsequences of $\mathrm{M}_{x_0}\longrightarrow \mathrm{M}_{x_0'}\longrightarrow \mathrm{M}_{x_1}\longrightarrow \mathrm{M}_{x_1'}\longrightarrow\cdots$. It is clear from the proof of \Cref{standard_form} that $S_0$ and $S$ do not change when we pass to a subsequence, hence the result.
\end{proof}

Now, we can give an explicit description of $K_0^\mathrm{gr}$ groups for a direct limit of matrix algebras with elementary gradings. 
\begin{definition}\label{def_K}
Let $\mathcal{A}=\mathrm{M}_{x_0}\otimes\bigotimes_{i=1}^\infty\mathrm{M}_{a_i}$ be in standard form. Consider $\mathbb{L}_S:=\prod_{j\in S}\mathbb{L}_j$ and the corresponding projection $\pi_S:\mathbb{Q}G\to\mathbb{L}_S$. Denote $b_k=\prod_{k=1}^i\bar{a}_k$ and observe that, since $\supp(a_i)=S$ for all $i$, each element $\pi_S(b_k)$ is invertible in $\mathbb{L}_S$. Hence, we may define the following subsets of $\mathbb{L}_S$:
\begin{align*}
K^+=K^+(\mathcal{A})&:=\bigcup_{i=1}^\infty\frac1{\pi_S(b_i)}\,\pi_S\left(\mathbb{Z}_{\ge0}G\right),\\
K=K(\mathcal{A})&:=\bigcup_{i=1}^\infty\frac1{\pi_S(b_i)}\,\pi_S\left(\mathbb{Z}G\right).
\end{align*}
\end{definition}

\begin{proposition}\label{K_0_ultraelementary}
Let $\mathcal{A}=\mathrm{M}_{x_0}\otimes\bigotimes_{i=1}^\infty\mathrm{M}_{a_i}$ be in standard form. Then \[K_0^\mathrm{gr}(\mathcal{A})\cong(K(\mathcal{A}),K^+(\mathcal{A}),\pi_S(\bar{x}_0)).\]
\end{proposition}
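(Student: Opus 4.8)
The plan is to compute $K_0^\mathrm{gr}(\mathcal{A})$ as a direct limit and then identify that limit concretely inside $\mathbb{L}_S$. Writing $\mathcal{A}$ as the limit of $\mathrm{M}_{x_0}\under{a_1}{\longrightarrow}\mathrm{M}_{x_1}\under{a_2}{\longrightarrow}\cdots$ with $x_i=x_0\prod_{j=1}^i a_j$, and using that $K_0^\mathrm{gr}$ commutes with direct limits as ordered $\mathbb{Z}G$-modules with order-unit \cite[Theorem 3.2.4]{Haz}, \Cref{K_0_simple} and \Cref{K_0_morphism} identify the induced system with
\[
(\mathbb{Z}G,\mathbb{Z}_{\ge0}G,\bar{x}_0)\over{\bar{a}_1}{\longrightarrow}(\mathbb{Z}G,\mathbb{Z}_{\ge0}G,\bar{x}_1)\over{\bar{a}_2}{\longrightarrow}(\mathbb{Z}G,\mathbb{Z}_{\ge0}G,\bar{x}_2)\over{\bar{a}_3}{\longrightarrow}\cdots,
\]
where each transition map is multiplication by $\bar{a}_i$. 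Since the bar involution is a ring automorphism of the commutative ring $\mathbb{Z}G$, we have $\bar{x}_i=\bar{x}_0\,b_i$. Thus $K_0^\mathrm{gr}(\mathcal{A})$ is this limit, and it remains to show it is isomorphic to $(K,K^+,\pi_S(\bar{x}_0))$.

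I would then produce a compatible cone of maps into $\mathbb{L}_S$. Define $\psi_i\colon\mathbb{Z}G\to\mathbb{L}_S$ by $\psi_i(z)=\pi_S(b_i)^{-1}\pi_S(z)$; each $\psi_i$ is a $\mathbb{Z}G$-module homomorphism because $\pi_S$ is a ring homomorphism and $\mathbb{L}_S$ is commutative. Using $b_{i+1}=b_i\bar{a}_{i+1}$, one checks directly that $\psi_{i+1}(\bar{a}_{i+1}z)=\psi_i(z)$, so the $\psi_i$ are compatible with the transition maps and induce a $\mathbb{Z}G$-module homomorphism $\psi$ from the limit into $\mathbb{L}_S$. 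By construction its image is the increasing union $\bigcup_i\psi_i(\mathbb{Z}G)=K$, the image of the (directed union of the) positive cones is $\bigcup_i\psi_i(\mathbb{Z}_{\ge0}G)=K^+$, and $\psi$ carries the order-unit $\bar{x}_i$ to $\psi_i(\bar{x}_0 b_i)=\pi_S(\bar{x}_0)$, independently of $i$.

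The one place where the standard-form hypothesis does real work is the injectivity of $\psi$. An element of the limit is represented by some $z\in\mathbb{Z}G$ at stage $i$, and it lies in $\ker\psi$ if and only if $\pi_S(z)=0$, i.e.\ $\pi_j(z)=0$ for all $j\in S$. I would show that such a $z$ already dies one stage later, namely $\bar{a}_{i+1}z=0$ in $\mathbb{Z}G$: for each $j\in\hat{G}/\Gamma$, either $j\in S$, so $\pi_j(z)=0$, or $j\notin S$, so $\pi_j(\bar{a}_{i+1})=0$ because $\supp(\bar{a}_{i+1})=\supp(a_{i+1})=S$ by \Cref{rem:conjugate} and the standard-form condition; in either case $\pi_j(\bar{a}_{i+1}z)=\pi_j(\bar{a}_{i+1})\pi_j(z)=0$. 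Hence $\bar{a}_{i+1}z=0$, so the class of $z$ in the limit is trivial and $\psi$ is injective.

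Finally, $\psi$ is a bijective $\mathbb{Z}G$-module homomorphism from $K_0^\mathrm{gr}(\mathcal{A})$ onto $K$ which carries the positive cone onto $K^+$ and the order-unit to $\pi_S(\bar{x}_0)$; its inverse therefore carries $K^+$ back to the positive cone, so $\psi$ is an isomorphism of partially ordered $\mathbb{Z}G$-modules with order-unit. This gives $K_0^\mathrm{gr}(\mathcal{A})\cong(K,K^+,\pi_S(\bar{x}_0))$, as claimed. Apart from the injectivity argument, every step is routine bookkeeping with the maps $\psi_i$.
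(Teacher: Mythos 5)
Your proposal is correct and follows essentially the same route as the paper's proof: identify the induced system as $(\mathbb{Z}G,\mathbb{Z}_{\ge0}G,\bar{x}_i)$ with transition maps $\bar{a}_i$ via \Cref{K_0_simple} and \Cref{K_0_morphism}, build the cone $\psi_i(z)=\pi_S(b_i)^{-1}\pi_S(z)$, and use the standard-form condition $\supp(\bar{a}_{i+1})=S$ to kill any kernel element one stage later. The injectivity argument you single out is exactly where the paper also invokes the standard form (together with \Cref{rem:conjugate}), so nothing is missing.
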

\begin{proof}
From \Cref{K_0_simple}, we know that $K_0^\mathrm{gr}(\mathrm{M}_{x_i})\cong(\mathbb{Z}G,\mathbb{Z}_{\ge0}G,\bar{x}_i)$. Since the functor $K_0^\mathrm{gr}$ commutes with direct limits, \Cref{K_0_morphism} tells us that $K_0^\mathrm{gr}(\mathcal{A})$ is isomorphic to the limit of the following sequence:
$$
(\mathbb{Z}G,\mathbb{Z}_{\ge0}G,\bar{x}_0)\over{\bar{a}_1}{\longrightarrow}(\mathbb{Z}G,\mathbb{Z}_{\ge0}G,\bar{x}_1)\over{\bar{a}_2}{\longrightarrow}(\mathbb{Z}G,\mathbb{Z}_{\ge0}G,\bar{x}_2)\over{\bar{a}_3}{\longrightarrow}\cdots
$$
Since $\pi_S$ is a ring homomorphism and $a_i\in\mathbb{Z}_{\geq 0}G$, we have 
\[
\frac1{\pi_S(b_i)}\,\pi_S(\mathbb{Z}_{\ge0}G)=\frac{\pi_S(a_{i+1})}{\pi_S(b_{i+1})}\,\pi_S(\mathbb{Z}_{\ge0}G)\subseteq\frac1{\pi_S(b_{i+1})}\,\pi_S(\mathbb{Z}_{\ge0}G),
\]
hence $K^+$ is an additive submonoid of $\mathbb{L}_S$, which is invariant under the action of $\mathbb{Z}_{\geq 0}G$ via $\pi_S$. Clearly, $K$ is the additive subgroup generated by $K^+$ and is invariant under $\mathbb{Z}G$. Now, we define $\psi_i:\mathbb{Z}G\to K$ by $\psi_i(z)=\frac1{\pi_S(b_i)}\pi_S(z)$. Then, $\psi_i$ is a morphism $K_0^\mathrm{gr}(\mathrm{M}_{x_i})\to(K,K^+,\pi_S(\bar{x}_0))$, i.e., a homomorphism of $\mathbb{Z}G$-modules preserving ordering and order-unit. In addition, we have commutative diagrams
$$
\begin{tikzcd}
(\mathbb{Z}G,\mathbb{Z}_{\ge0}G,\bar{x}_i)\arrow{r}{\bar{a}_{i+1}}\arrow{rd}{\psi_i}&(\mathbb{Z}G,\mathbb{Z}_{\ge0}G,\bar{x}_{i+1})\arrow{d}{\psi_{i+1}}\\&(K,K^+,\pi_S(\bar{x}_0))
\end{tikzcd}
$$
and, hence, obtain a morphism $\psi:K_0^\mathrm{gr}(\mathcal{A})\to (K,K^+,\pi_S(\bar{x}_0))$. Moreover, for each $i\in\mathbb{N}$, we have the commutative diagram
$$
\begin{tikzcd}
K_0^\mathrm{gr}(\mathrm{M}_{x_i})\arrow{r}{\varphi_i}\arrow{rd}{\psi_i}&K_0^\mathrm{gr}(\mathcal{A})\arrow{d}{\psi}\\&(K,K^+,\pi_S(\bar{x}_0)),
\end{tikzcd}
$$
where $\varphi_i:K_0^\mathrm{gr}(\mathrm{M}_{x_i})\to K_0^\mathrm{gr}(\mathcal{A})$ is the canonical map. By construction, it is clear that $\psi$ is surjective. Let $z\in\mathrm{Ker}\,\psi$. Then, there exists $z_i\in\mathbb{Z}G$ such that $\varphi_i(z_i)=z$. Hence, $\psi_i(z_i)=0$, so $\supp z_i\cap S=\emptyset$. Since the limit is in standard form and $\supp(\bar{a})=\supp(a)$ by \Cref{rem:conjugate}, this implies $\bar{a}_{i+1}z_i=0$. Thus, $z=\varphi_i(z_i)=\varphi_{i+1}(\bar{a}_{i+1}z_i)=0$. Therefore, $\psi$ is an isomorphism.
\end{proof}

\begin{remark}\label{SS_0}
An alternative way to prove \Cref{S_invariant} is to express $S$ and $S_0$ in terms of $K_0^\mathrm{gr}(\mathcal{A})$ for  $\mathcal{A}=\lim\limits_{\longrightarrow}\mathrm{M}_{x_i}(\mathbb{F})$. Using \Cref{K_0_ultraelementary}, one can easily check the following:
\begin{itemize}
\item For each $j\in\hat{G}/\Gamma$, pick an integer $n_j>0$ such that the multiple $e'_j:=n_je_j$ belongs to $\mathbb{Z}G$. Then, $S=\{j\in\hat{G}/\Gamma\mid e'_jK_0^\mathrm{gr}(\mathcal{A})\neq 0\}$.
\item $S_0=\{j\in\hat{G}/\Gamma\mid e'_j[\mathcal{A}]\ne0\}$.
\end{itemize}
\end{remark}

Now, we are ready to establish an isomorphism criterion for direct limit of matrix algebras with elementary gradings.
\begin{Thm}\label{thm1}
Let $\mathcal{A}=\mathrm{M}_{x_0}\otimes\bigotimes_{i=1}^\infty\mathrm{M}_{a_i}$ and $\mathcal{A}'=\mathrm{M}_{x'_0}\otimes\bigotimes_{i=1}^\infty\mathrm{M}_{a'_i}$ be in standard form. Then $\mathcal{A}\cong\mathcal{A}'$ if and only if $S=S'$ and there exist elements $b,b'\in\mathbb{Z}_{\ge0}G$ with support $S$ satisfying $b\bar{x}_0=b'\bar{x}_0'$ and $b K^+(\mathcal{A})=b'K^+(\mathcal{A}')$.
\end{Thm}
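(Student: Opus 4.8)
The plan is to reduce the isomorphism problem first to the level of $K_0^{\mathrm{gr}}$ triples and then to a concrete question about units in $\mathbb{L}_S$. Since $\mathcal{A}$ and $\mathcal{A}'$ are $G$-graded ultramatricial $\mathbb{F}$-algebras (here the fixed commutative graded-division algebra is $\mathbb{F}$ with its trivial grading), the graded Elliott theorem \cite[Theorem 5.2.4]{Haz} gives $\mathcal{A}\cong\mathcal{A}'$ if and only if the triples $K_0^\mathrm{gr}(\mathcal{A})$ and $K_0^\mathrm{gr}(\mathcal{A}')$ are isomorphic. By \Cref{K_0_ultraelementary} these triples are $(K^+,K,\pi_S(\bar{x}_0))$ and $(K^{\prime+},K',\pi_{S'}(\bar{x}_0'))$, where I write $K^+=K^+(\mathcal{A})$, $K^{\prime+}=K^+(\mathcal{A}')$, etc. So the whole statement reduces to showing that these two triples are isomorphic exactly when $S=S'$ and there exist $b,b'\in\mathbb{Z}_{\ge0}G$ of support $S$ with $b\bar{x}_0=b'\bar{x}_0'$ and $bK^+=b'K^{\prime+}$ (products read via $\pi_S$).

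The first real step is to describe all isomorphisms of these triples. I would observe that $K$ is torsion-free and that its $\mathbb{Q}$-span inside $\mathbb{L}_S$ is all of $\mathbb{L}_S$, since $K$ already contains $\pi_S(\mathbb{Z}G)$ and this spans $\pi_S(\mathbb{Q}G)=\mathbb{L}_S$. As $\mathbb{Q}$ is flat over $\mathbb{Z}$, the inclusion $K\hookrightarrow\mathbb{L}_S$ yields an isomorphism $\mathbb{Q}\otimes_\mathbb{Z}K\cong\mathbb{L}_S$ of $\mathbb{Q}G$-modules, and similarly for $K'$. Hence any $\mathbb{Z}G$-module isomorphism $\varphi\colon K\to K'$ extends to a $\mathbb{Q}G$-module isomorphism $\mathbb{L}_S\to\mathbb{L}_{S'}$. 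Since $\mathbb{L}_S=\bigoplus_{j\in S}\mathbb{L}_j$ and $\mathbb{L}_{S'}=\bigoplus_{j\in S'}\mathbb{L}_j$ are decompositions into pairwise non-isomorphic simple $\mathbb{Q}G$-modules, uniqueness of composition factors forces $S=S'$, and then $\End_{\mathbb{Q}G}(\mathbb{L}_S)=\mathbb{L}_S$ shows $\varphi$ is multiplication by a unit $u\in\mathbb{L}_S^\times$. Thus the triples are isomorphic precisely when $S=S'$ and there is $u\in\mathbb{L}_S^\times$ with $uK^+=K^{\prime+}$ and $u\,\pi_S(\bar{x}_0)=\pi_S(\bar{x}_0')$.

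It then remains to match this ``unit condition'' with the existence of $b,b'$. One direction is immediate: given $b,b'$ of support $S$, I set $u=\pi_S(b)/\pi_S(b')$ (a unit, since $\pi_S(b),\pi_S(b')$ are invertible) and read off $u\,\pi_S(\bar{x}_0)=\pi_S(\bar{x}_0')$ from $b\bar{x}_0=b'\bar{x}_0'$ and $uK^+=K^{\prime+}$ from $bK^+=b'K^{\prime+}$. The converse is the main obstacle, because I must realize an \emph{abstract} unit $u$ by nonnegative integer data with support exactly $S$. The idea is to exploit positivity: since $1=\pi_S(e)\in\pi_S(\mathbb{Z}_{\ge0}G)\subseteq K^+$, we have $u=u\cdot1\in uK^+=K^{\prime+}$, so by \Cref{def_K} $u=\pi_S(d')/\pi_S(b_j')$ for some $d'\in\mathbb{Z}_{\ge0}G$ and some $j$, with $b_j'=\prod_{k=1}^{j}\bar{a}_k'$. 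Because support is multiplicative on $\mathbb{Q}G=\prod_j\mathbb{L}_j$ (each $\mathbb{L}_j$ being a field, $\supp(zw)=\supp(z)\cap\supp(w)$) and $\supp(\bar{a}_k')=\supp(a_k')=S$ by \Cref{rem:conjugate} and the standard form, we get $\supp(b_j')=S$, while $\supp(d')\supseteq S$ since $\pi_S(d')=u\,\pi_S(b_j')$ is invertible.

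Finally I would set $b:=d'\,b_j'$ and $b':=(b_j')^2$. Both lie in $\mathbb{Z}_{\ge0}G$, both have support exactly $S$ (as $\supp(d')\cap S=S$), and $\pi_S(b)/\pi_S(b')=\pi_S(d')/\pi_S(b_j')=u$; this multiplication trick is precisely what cleans up the excess support of $d'$. With this ratio in hand, the conditions $b\bar{x}_0=b'\bar{x}_0'$ and $bK^+=b'K^{\prime+}$ follow from $u\,\pi_S(\bar{x}_0)=\pi_S(\bar{x}_0')$ and $uK^+=K^{\prime+}$ respectively, where the equality involving $\bar{x}_0$ holds in $\mathbb{Z}G$ because $b,b'$ are supported on $S$ and hence so are $b\bar{x}_0$ and $b'\bar{x}_0'$. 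I expect the realization of $u$ from the positive cone, together with the cleanup trick, to be the crux; the remaining verifications are routine.
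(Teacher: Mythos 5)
Your proof is correct, and the converse direction coincides with the paper's (multiplication by $\pi_S(b)/\pi_S(b')$ is an isomorphism of triples, then \cite[Theorem 5.2.4]{Haz}); but your forward direction takes a genuinely different route. The paper argues at the level of the algebras: it invokes \Cref{S_invariant} to get $S=S'$, interleaves the two defining sequences via \Cref{lem1}, and extracts $b$ and $b'$ as labels of the intertwining embeddings, adjusting supports by absorbing extra factors $a_i'$. You instead stay entirely inside $K$-theory: after reducing to isomorphism of the triples, you classify \emph{all} $\mathbb{Z}G$-module isomorphisms $K\to K'$ by tensoring with $\mathbb{Q}$, using semisimplicity of $\mathbb{Q}G$ to force $S=S'$ and $\End_{\mathbb{Q}G}(\mathbb{L}_S)=\mathbb{L}_S$ to see that the map is multiplication by a unit $u$, and then use positivity ($u=u\cdot 1\in K^{\prime+}$, hence $u=\pi_S(d')/\pi_S(b_j')$ by \Cref{def_K}) together with the cleanup $b:=d'b_j'$, $b':=(b_j')^2$ to realize $u$ by nonnegative integer data of support exactly $S$; the final lift of $\pi_S(b\bar{x}_0)=\pi_S(b'\bar{x}_0')$ to $\mathbb{Z}G$ via the support constraint is sound. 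Your route buys a sharper structural statement --- every isomorphism of the triples is multiplication by a unit of $\mathbb{L}_S$ of the prescribed form --- and it makes \Cref{S_invariant} and the interleaving argument unnecessary for this theorem; the paper's route is more concrete in that $b$ and $b'$ arise from actual embeddings between the matrix algebras, which is the viewpoint reused later in the proof of \Cref{thm}.
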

\begin{proof}
The triple $(K,K^+,\pi_S(\bar{x}_0))$ does not change if we pass to a subsequence of $\mathrm{M}_{x_i}$ with the same initial point $\mathrm{M}_{x_0}$, but it changes to $(b_k K,b_k K^+,\pi_S(b_k\bar{x}_0))$ if we start at $\mathrm{M}_{x_k}$ instead of $\mathrm{M}_{x_0}$. If $\mathcal{A}\cong\mathcal{A}'$ then $S=S'$ by \Cref{S_invariant}. Also, by \Cref{lem1}, we may assume that we have
\[
\mathrm{M}_{x_0}\over{\varepsilon_0}{\longrightarrow}\mathrm{M}_{b_{k'}'x_0'}\over{\varepsilon_0'}{\longrightarrow}\mathrm{M}_{x_1}\over{\varepsilon_1}{\longrightarrow}\mathrm{M}_{x_1'}\over{\varepsilon_1'}{\longrightarrow}\cdots,
\]
where $x_0$ and $x_0'$ are the same as before and the rest of $x_i$, $x_i'$ are subsequences of the original sequences, which we renamed to simplify notation. Let $a$ be a label of $\varepsilon_0$. Then $a$ divides $a_1$ in $\mathbb{Z}_{\ge0}G$, so $\supp a\supseteq S$. Skipping one step to $\mathrm{M}_{x_1'}$, we may replace $a$ by $aa_1'$, so we may assume that $\supp a=S$. Repeating this, we get a limit in standard form. On the subsequence $\mathrm{M}_{x_i}$, 
we get $(K,K^+,\pi_S(\bar{x}_0))$, while on $\mathrm{M}_{x'_i}$, we get $(b'_{k'} K',b'_{k'} K^{\prime+},\pi_S(b'_{k'}\bar{x}'_0))$, which then should be equal to $(\bar{a} K,\bar{a}K^+,\pi_S(\bar{a}\bar{x}_0))$. Since $\supp(\bar{a}\bar{x}_0)$ and $\supp(b'_{k'}\bar{x}'_0)$ are contained in $S$, we get $\bar{a}\bar{x}_0=b'_{k'}\bar{x}_0'$.

For the converse, by hypothesis, $\pi_S(b)$ and $\pi_S(b')$ are invertible in $\mathbb{L}_S$, hence the multiplication by $\frac{\pi_S(b)}{\pi_S(b')}$ is an automorphism of the additive group of $\mathbb{L}_S$, and it maps $(K,K^+,\pi_S(\bar{x}_0))$ onto $(K',K^{\prime+},\pi_S(\bar{x}'_0))$. In view of \Cref{K_0_ultraelementary}, this means $K_0^\mathrm{gr}(\mathcal{A})\cong K_0^\mathrm{gr}(\mathcal{A}')$, so the result follows from \cite[Theorem 5.2.4]{Haz}.
\end{proof}

\begin{corollary}
Let $\mathcal{A}=\mathrm{M}_{x_0}\otimes\bigotimes_{i=1}^\infty\mathrm{M}_{a_i}$ with $\supp(x_0)=\supp(a_i)=S$ and let $c\in\mathbb{Z}_{\ge0}G$. Then $\mathcal{A}\otimes\mathrm{M}_c\cong\mathcal{A}$ if and only if $\bar{c} K^+=K^+$.
\end{corollary}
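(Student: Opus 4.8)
The plan is to apply the isomorphism criterion of \Cref{thm1} to the two limits $\mathcal{A}$ and $\mathcal{A}\otimes\mathrm{M}_c$. First I would observe that $\mathcal{A}\otimes\mathrm{M}_c\cong\mathrm{M}_{x_0c}\otimes\bigotimes_{i=1}^\infty\mathrm{M}_{a_i}$, since tensoring the whole direct system by the fixed $\mathrm{M}_c$ (using $\mathrm{M}_x\otimes\mathrm{M}_y\cong\mathrm{M}_{xy}$ and the compatibility of labels with tensoring) just replaces the starting point $\mathrm{M}_{x_0}$ by $\mathrm{M}_{x_0c}$ while leaving all the labels $a_i$ unchanged. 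Thus $\mathcal{A}\otimes\mathrm{M}_c$ is again a limit with the \emph{same} support data: its set $S'$ equals $S$ (the labels are identical), and its order-unit is $\pi_S(\overline{x_0c})=\pi_S(\bar{c})\pi_S(\bar{x}_0)$. A subtle point I must check is that this new limit is still in standard form, which requires $\supp(x_0c)\subseteq S$; since $\supp(x_0)=S$ and support of a product only shrinks, we have $\supp(x_0c)\subseteq\supp(x_0)=S$, so the hypothesis of \Cref{thm1} is met.

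Next I would compute $K^+(\mathcal{A}\otimes\mathrm{M}_c)$ in the realization of \Cref{def_K}. Because the labels $a_i$ are unchanged, the elements $b_i=\prod_{k=1}^i\bar{a}_k$ are the same for both limits, so the union defining $K^+$ coincides literally: $K^+(\mathcal{A}\otimes\mathrm{M}_c)=K^+(\mathcal{A})=K^+$. The only change is in the order-unit. Therefore \Cref{thm1} (with $S=S'$ automatic) reduces the isomorphism $\mathcal{A}\otimes\mathrm{M}_c\cong\mathcal{A}$ to the existence of $b,b'\in\mathbb{Z}_{\ge0}G$ with support $S$ satisfying the two conditions $b\,\pi_S(\bar{x}_0)=b'\,\pi_S(\overline{x_0c})$ and $b\,K^+=b'\,K^+$ (after applying $\pi_S$, working inside $\mathbb{L}_S$). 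The first condition becomes $\pi_S(b)\pi_S(\bar{x}_0)=\pi_S(b')\pi_S(\bar{c})\pi_S(\bar{x}_0)$ in $\mathbb{L}_S$, and the second becomes $\pi_S(b)K^+=\pi_S(b')K^+$.

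The remaining work is to show that such $b,b'$ exist precisely when $\bar{c}K^+=K^+$. For the ``if'' direction, assuming $\bar{c}K^+=K^+$, I would take $b'=x_S$ (the sum $\sum_{t\in S\text{-relevant}}$, i.e. any fixed element of support $S$, for instance one whose image under $\pi_S$ is a unit) and $b=b'c$, or more simply exploit that $\pi_S(\bar{c})$ is already invertible in $\mathbb{L}_S$ (since $\bar{c}K^+=K^+$ forces $\pi_S(\bar{c})$ to be a unit preserving $K^+$); then $b=b'$ works for the first equation after cancelling the invertible $\pi_S(\bar{x}_0)$, and the second equation is exactly $\bar{c}K^+=K^+$ rewritten. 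For the ``only if'' direction, given such $b,b'$ I would cancel $\pi_S(\bar{x}_0)$ (invertible since $\supp x_0=S$) from the first equation to get $\pi_S(b)=\pi_S(b')\pi_S(\bar{c})$, substitute into the second to obtain $\pi_S(b')\pi_S(\bar{c})K^+=\pi_S(b')K^+$, and cancel the invertible unit $\pi_S(b')$ to conclude $\pi_S(\bar{c})K^+=K^+$, i.e. $\bar{c}K^+=K^+$. The main obstacle I anticipate is bookkeeping with the projection $\pi_S$ and the invertibility of the various elements in $\mathbb{L}_S$: I must verify at each step that $\pi_S(\bar{x}_0)$, $\pi_S(b)$, $\pi_S(b')$ are units (guaranteed by their supports equalling $S$, via the argument in \Cref{def_K}), so that the cancellations are legitimate, and ensure the multiset element $\bar{c}$ acting on $K^+\subseteq\mathbb{L}_S$ is consistently interpreted as multiplication by $\pi_S(\bar{c})$.
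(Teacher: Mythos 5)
Your proposal is correct and follows essentially the same route as the paper, which likewise just specializes \Cref{thm1} to $x_0'=cx_0$, $a_i'=a_i$ and cancels the invertible elements $\pi_S(\bar{x}_0)$ and $\pi_S(b')$; your additional checks (that $\mathcal{A}\otimes\mathrm{M}_c$ is still in standard form, that $K^+$ is unchanged, and that $\bar{c}K^+=K^+$ forces $\pi_S(\bar{c})$ to be a unit) are exactly the details the paper's two-line proof leaves implicit. One small slip in the ``if'' direction: the correct choice is $b=b'\bar{c}$ (not $b'c$, and certainly not $b=b'$, which after cancelling $\pi_S(\bar{x}_0)$ would force $\pi_S(\bar{c})=1$); with $b=b'\bar{c}$ both conditions of \Cref{thm1} hold, and $\supp(b'\bar{c})=S$ precisely because the invertibility of $\pi_S(\bar{c})$ gives $\supp(\bar{c})\supseteq S$.
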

\begin{proof}
In this case, using the notation of \Cref{thm1}, we have $x_0'=cx_0$ and $a'_i=a_i$, so the condition becomes $b\bar{x}_0=b'\bar{c}\bar{x}_0$ and $bK^+=b'K^+$. Under the hypothesis on $\supp(x_0)$, this becomes $b=b'\bar{c}$ and $\bar{c}K^+=K^+$.
\end{proof}

\section{Main results\label{s:main_results}}
As in the previous section, let $G$ be a finite abelian group. We will also assume that the ground field $\mathbb{F}$ is algebraically closed. Consider a sequence of unital embeddings of finite-dimensional $G$-graded-simple associative algebras:
\[
\mathcal{A}_0\over{\iota_1}{\longrightarrow}\mathcal{A}_1\over{\iota_2}{\longrightarrow}\mathcal{A}_2\over{\iota_3}{\longrightarrow}\cdots
\]
By the graded Wedderburn Theorem, we have $\mathcal{A}_i\cong\mathrm{M}_{x_i}(\mathcal{D}_i)$, for some $x_i\in\mathbb{Z}_{\geq 0}G$ and graded-division algebra $\mathcal{D}_i$. Since $\mathbb{F}$ is algebraically closed and $G$ is finite, there exists a finite number of isomorphism classes of $G$-graded-division algebras over $\mathbb{F}$. Hence, passing to a subsequence, we may assume that  $\mathcal{A}_i\cong\mathrm{M}_{x_i}(\mathcal{D})$ with a constant $\mathcal{D}$. Let $T=\Supp\mathcal{D}$.

From now on, we assume that each $\mathcal{A}_i$ is central simple as an algebra. Then, by \Cref{gr_NS}, each embedding $\iota_i:\mathcal{A}_{i-1}\to\mathcal{A}_i$ is determined, up to equivalence, by the isomorphism class of the centralizer $\mathcal{C}_i$ of $\iota_i(\mathcal{A}_{i-1})$ in $\mathcal{A}_i$. By the Double Centralizer Theorem, we have $\mathcal{A}_i\cong\mathcal{A}_{i-1}\otimes\mathcal{C}_i$, so $[\mathcal{A}_i]=[\mathcal{A}_{i-1}][\mathcal{C}_i]$ in $\mathrm{Br}_G(\mathbb{F})$. Since $[\mathcal{A}_{i-1}]=[\mathcal{A}_i]=[\mathcal{D}]$, we see that $[\mathcal{C}_i]=[\mathbb{F}]$, i.e., $\mathcal{C}_i$ has an elementary grading: $\mathcal{C}_i\cong\mathrm{M}_{a_i}$ for some $a_i\in\mathbb{Z}_{\geq 0}G$. Since $\mathcal{A}_{i-1}\cong\mathrm{M}_{x_{i-1}}\otimes\mathcal{D}$, we get $\mathcal{A}_i\cong\mathrm{M}_{x_{i-1}}\otimes\mathcal{C}_i\otimes\mathcal{D}$. It follows that $\iota_i$ is equivalent to the tensor product of the embedding $\mathrm{M}_{x_{i-1}}\to\mathrm{M}_{x_{i-1}}\otimes\mathcal{C}_i$ with $\mathrm{id}_\mathcal{D}$, and we may assume $x_i=x_{i-1}a_i$. Hence, by \Cref{lm:equiv_embeddings},
\[
\lim\limits_{\longrightarrow}\mathcal{A}_i\cong\left(\lim\limits_{\longrightarrow}\mathrm{M}_{x_i}\right)\otimes\mathcal{D},
\]
where the embedding $\mathrm{M}_{x_{i-1}}\to\mathrm{M}_{x_{i}}$ has label $a_i$.
Thus, in order to classify direct limits of matrix algebras with $G$-gradings, it is sufficient to understand the tensor product of a graded-division algebra $\mathcal{D}$ and a direct limit of matrix algebras with elementary gradings.
The following result shows that $K_0^\mathrm{gr}$ carries limited information for such a tensor product (see also \Cref{example3} below).

\begin{proposition}\label{K_0_division}
If $\mathcal{D}$ is a $G$-graded-division algebra with support $T$ and $\mathcal{A}$ is a direct limit of matrix algebras over $\mathbb{F}$ with elementary gradings, then 
\[
K_0^G(\mathcal{A}\otimes\mathcal{D})\cong K_0^{G/T}({}^\alpha\mathcal{A}),
\]
where $\alpha:G\to G/T$ is the quotient map. 
\end{proposition}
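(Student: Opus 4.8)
The plan is to reduce everything to \Cref{K_0_simple,K_0_morphism} by using that $K_0^\mathrm{gr}$ commutes with direct limits. Write $\mathcal{A}=\lim\limits_{\longrightarrow}\mathrm{M}_{x_i}$ with the elementary gradings, where the embedding $\mathrm{M}_{x_{i-1}}\to\mathrm{M}_{x_i}$ has label $a_i$ (so $x_i=x_{i-1}a_i$). Tensoring with $\mathcal{D}$ and using $\mathrm{M}_{x_i}\otimes\mathcal{D}\cong\mathrm{M}_{x_i}(\mathcal{D})$ gives $\mathcal{A}\otimes\mathcal{D}\cong\lim\limits_{\longrightarrow}\mathrm{M}_{x_i}(\mathcal{D})$, where the connecting map is $(\text{the label }a_i)\otimes\mathrm{id}_\mathcal{D}$. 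On the other side, coarsening the elementary $G$-grading along $\alpha$ turns $\mathrm{M}_{x_i}$ into the elementary $G/T$-graded algebra $\mathrm{M}_{\alpha(x_i)}(\mathbb{F})$, where $\alpha(x_i)$ denotes the image of the multiset $x_i$ in $\mathbb{Z}_{\ge0}(G/T)$; thus ${}^\alpha\mathcal{A}\cong\lim\limits_{\longrightarrow}\mathrm{M}_{\alpha(x_i)}(\mathbb{F})$ with connecting maps of label $\alpha(a_i)$. It therefore suffices to produce, for each $i$, an isomorphism of ordered $\mathbb{Z}(G/T)$-modules with order-unit $K_0^G(\mathrm{M}_{x_i}(\mathcal{D}))\cong K_0^{G/T}(\mathrm{M}_{\alpha(x_i)}(\mathbb{F}))$ compatible with the connecting maps, and then pass to the limit.

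The right-hand side comes directly from \Cref{K_0_simple} applied over $G/T$: $K_0^{G/T}(\mathrm{M}_{\alpha(x)}(\mathbb{F}))\cong(\mathbb{Z}(G/T),\mathbb{Z}_{\ge0}(G/T),\overline{\alpha(x)})$, and by \Cref{K_0_morphism} the connecting map is multiplication by $\overline{\alpha(a_i)}$.

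The crux is to show that $K_0^G(\mathrm{M}_x(\mathcal{D}))$ is given by the same triple. Let $\mathcal{V}=\mathcal{V}_x$ be the graded simple (column) module; its shifts $\mathcal{V}^{[g]}$ exhaust the graded simple $\mathrm{M}_x(\mathcal{D})$-modules, and every finitely generated graded module is a finite direct sum of these, hence projective. The key point is that $\mathcal{V}^{[g]}\cong\mathcal{V}^{[h]}$ as graded modules if and only if $gT=hT$: a homogeneous isomorphism $\mathcal{V}^{[g]}\to\mathcal{V}^{[h]}$ amounts to a homogeneous unit of $\End_{\mathrm{M}_x(\mathcal{D})}(\mathcal{V})\cong\mathcal{D}$ of degree $gh^{-1}$, which exists precisely when $gh^{-1}\in\Supp\mathcal{D}=T$. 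This identifies $K_0^G(\mathrm{M}_x(\mathcal{D}))^+$ with the free monoid on $G/T$ and $K_0^G(\mathrm{M}_x(\mathcal{D}))\cong\mathbb{Z}(G/T)$, the shift action of $G$ factoring through $\alpha$ (so $T$ acts trivially and the structure is that of a $\mathbb{Z}(G/T)$-module). To pin down the order-unit, decompose the regular module $\mathrm{M}_x(\mathcal{D})$ into its $|x|$ columns and identify the $j$-th column with a shift $\mathcal{V}^{[g_1 g_j^{-1}]}$; summing gives $[\mathrm{M}_x(\mathcal{D})]=\sum_g x(g)\,g^{-1}T$, which is exactly $\overline{\alpha(x)}$ under the identification $K_0^G(\mathrm{M}_x(\mathcal{D}))\cong\mathbb{Z}(G/T)$ (up to the overall shift ambiguity already present in \Cref{K_0_simple}). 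Finally, rerunning the computation of \Cref{K_0_morphism} with $\mathcal{D}$-coefficients, via $\mathrm{M}_{x_i}(\mathcal{D})\cong\mathrm{M}_{a_i}\otimes\mathrm{M}_{x_{i-1}}(\mathcal{D})$, shows that the left-hand connecting map is multiplication by $\alpha(\overline{a_i})=\overline{\alpha(a_i)}$ as well. The two direct systems thus coincide, and $K_0^\mathrm{gr}$ commuting with direct limits yields $K_0^G(\mathcal{A}\otimes\mathcal{D})\cong K_0^{G/T}({}^\alpha\mathcal{A})$.

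The main obstacle is the explicit description of $K_0^G(\mathrm{M}_x(\mathcal{D}))$: one must track the shifts carefully so that the collapse $G\to G/T$ appears in the module structure (this is exactly where the support $T$ enters and makes $T$ act trivially) and so that the order-unit comes out as $\overline{\alpha(x)}$ rather than some unmatched shift of it. Once the isomorphism classes of graded simple modules are settled, the remainder is bookkeeping parallel to \Cref{K_0_simple,K_0_morphism}.
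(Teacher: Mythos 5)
Your proposal is correct and follows essentially the same route as the paper: identify the unique graded-simple module of $\mathrm{M}_x(\mathcal{D})$ (your column module is the paper's $\mathcal{V}_x\otimes\mathcal{D}$), observe that its shifts by $g$ and $h$ are isomorphic precisely when $g\equiv h\pmod{T}$, deduce $K_0^G(\mathrm{M}_x(\mathcal{D}))\cong(\mathbb{Z}(G/T),\mathbb{Z}_{\ge0}(G/T),\alpha(\bar{x}))$ with connecting maps given by multiplication by $\overline{\alpha(a_i)}$, and pass to the direct limit. Your justification of the shift-equivalence criterion via homogeneous units of $\End_{\mathrm{M}_x(\mathcal{D})}(\mathcal{V})\cong\mathcal{D}$ is a detail the paper leaves implicit, but the argument is the same.
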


\begin{proof}
Let $\mathcal{V}$ be the graded $\mathrm{M}_x$-module as in the proof of \Cref{K_0_simple}. Then $\mathcal{V}\otimes\mathcal{D}$ is a graded-simple $\mathrm{M}_x(\mathcal{D})$-module. It is unique up to isomorphism and shift, and $(\mathcal{V}\otimes\mathcal{D})^{[g]}\cong(\mathcal{V}\otimes\mathcal{D})^{[h]}$ if and only if $g\equiv h\pmod{T}$. As in \Cref{K_0_simple}, we then obtain that $K_0^G(\mathrm{M}_x(\mathcal D))\cong(\mathbb{Z}(G/T),\mathbb{Z}_{\geq 0}(G/T),\alpha(\bar{x}))$, so we have $K_0^G(\mathrm{M}_x\otimes\mathcal D)\cong K_0^{G/T}(\mathrm{M}_{\alpha(x)})\cong K_0^{G/T}({}^\alpha\mathrm{M}_x)$. Now, if $\mathcal{A}=\lim\limits_{\longrightarrow}\mathrm{M}_{x_i}$, then 
\begin{equation*}
\begin{split}
K_0^G(\mathcal{A}\otimes\mathcal{D})&\cong K_0^G\bigl(\lim\limits_{\longrightarrow}(\mathrm{M}_{x_i}\otimes\mathcal{D})\bigr)\cong\lim\limits_{\longrightarrow}K_0^G\left(\mathrm{M}_{x_i}\otimes\mathcal{D}\right)\\
&\cong\lim\limits_{\longrightarrow}K_0^{G/T}({}^\alpha\mathrm{M}_{x_i})
\cong K_0^{G/T}\bigl(\lim\limits_{\longrightarrow}{}^\alpha\mathrm{M}_{x_i}\bigr)\cong K_0^{G/T}({}^\alpha\mathcal{A}),
\end{split}
\end{equation*}
as claimed.
\end{proof}

The next result characterizes when a graded-division algebra can be ``absorbed" by the direct limit of matrix algebras with elementary gradings.
\begin{Thm}\label{thm}
Let $G$ be a finite abelian group and $\mathbb{F}$ an algebraically closed field. If $\mathcal{A}$ is a direct limit of matrix algebras over $\mathbb{F}$ endowed with elementary gradings and $\mathcal{D}$ is a $G$-graded-division algebra with support $T$ that is central simple as an algebra, then the following statements are equivalent:
\begin{enumerate}
\renewcommand{\labelenumi}{(\roman{enumi})}
\item $\mathcal{A}\otimes\mathcal{D}$ is a direct limit of matrix algebras with elementary gradings,
\item $x_T\cdot K_0^\mathrm{gr}(\mathcal{A})^+=K_0^\mathrm{gr}(\mathcal{A})^+$, where $x_T=\sum_{t\in T}t$,
\item $|T|\cdot K_0^\mathrm{gr}(\mathcal{A})^+=K_0^\mathrm{gr}(\mathcal{A})^+$ and $\supp(x_T)\supseteq S$, where $S$ is as in \Cref{SS_0}.
\end{enumerate}
Moreover, if this is the case, then $\mathcal{A}\otimes\mathcal{D}\cong\mathcal{A}$.
\end{Thm}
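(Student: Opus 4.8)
The plan is to reduce to a standard-form presentation via \Cref{standard_form}, writing $\mathcal{A}=\lim\limits_{\longrightarrow}\mathrm{M}_{x_i}$ with all labels $a_i$ of support $S$, and then to prove the statement through the cycle (i)$\Rightarrow$(iii), (iii)$\Rightarrow\mathcal{A}\otimes\mathcal{D}\cong\mathcal{A}$, and $\mathcal{A}\otimes\mathcal{D}\cong\mathcal{A}\Rightarrow$(i) (the last being trivial, since $\mathcal{A}$ is by hypothesis a limit of matrix algebras with elementary gradings), together with the separate equivalence (ii)$\Leftrightarrow$(iii). This simultaneously proves the equivalences and the ``moreover'' clause. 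Throughout, the decisive structural input is the identity \eqref{DDop}, namely $\mathcal{D}\otimes\mathcal{D}^\mathrm{op}\cong\mathrm{M}_{x_T}$, which governs how the division part reproduces itself, the explicit realization of $K_0^\mathrm{gr}$ from \Cref{K_0_ultraelementary}, and the intrinsic description of $S$ in \Cref{SS_0}.

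I would dispose of (ii)$\Leftrightarrow$(iii) first, as it is a character computation. Set $S_T:=\supp(x_T)$. Since the class $x_T$ acts on $K_0^\mathrm{gr}(\mathcal{A})^+=K^+\subseteq\mathbb{L}_S$ as multiplication by $\pi_S(\bar{x}_T)=\pi_S(x_T)$ (using $\bar{x}_T=x_T$), I compute $\pi_j(x_T)$, which is represented by $\sum_{t\in T}\chi(t)$ for $\chi\in j$; by orthogonality of characters of $T$ this equals $|T|$ when $\chi|_T$ is trivial and $0$ otherwise, so $\pi_j(x_T)=|T|$ for $j\in S_T$ and $0$ for $j\notin S_T$. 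Thus multiplication by $x_T$ is the zero map on each component $j\in S\setminus S_T$ and multiplication by $|T|$ on each $j\in S\cap S_T$. As $e'_jK^+\neq0$ for every $j\in S$ by \Cref{SS_0}, the map $x_T\colon K^+\to K^+$ can be surjective only if $S\subseteq S_T$, in which case it coincides with multiplication by $|T|$; this gives exactly (ii)$\Leftrightarrow$(iii), with the support clause $\supp(x_T)\supseteq S$ corresponding to $S\subseteq S_T$.

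For the substantive part I would use the interleaving criterion \Cref{lem1} to compare $\mathcal{A}=\lim\limits_{\longrightarrow}\mathrm{M}_{x_i}$ with $\mathcal{A}\otimes\mathcal{D}=\lim\limits_{\longrightarrow}\mathrm{M}_{x_i}(\mathcal{D})$. The forward embeddings $\mathrm{M}_{x_i}\to\mathrm{M}_{x_i}(\mathcal{D})$, $a\mapsto a\otimes1$, always exist. A backward embedding $\mathrm{M}_{x_j}(\mathcal{D})\to\mathrm{M}_{x_k}$ exists iff its centralizer represents $[\mathcal{D}^\mathrm{op}]$, i.e. equals some $\mathrm{M}_c(\mathcal{D}^\mathrm{op})$, and then $\mathrm{M}_{x_j}(\mathcal{D})\otimes\mathrm{M}_c(\mathcal{D}^\mathrm{op})\cong\mathrm{M}_{x_jcx_T}$ by \eqref{DDop}; so such an embedding into the term $\mathrm{M}_{x_k}$ of $\mathcal{A}$ exists precisely when the intervening label $\prod_{j<l\le k}a_l$ is divisible by $x_T$ in $\mathbb{Z}_{\ge0}G$. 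The key elementary observation is that divisibility by $x_T$ with quotient in $\mathbb{Z}_{\ge0}G$ is equivalent to $T$-invariance of the element, hence to having $\supp(\,\cdot\,)\subseteq S_T$. Since in standard form every partial product satisfies $\supp(\prod_{j<l\le k}a_l)=S$, condition (iii) (equivalently $S\subseteq S_T$) makes every such product divisible by $x_T$; computing centralizers through the elementary middle terms and adjusting embeddings by equivalences (\Cref{gr_NS}, \Cref{lm:equiv_embeddings}) then shows the round trips realize the structure maps of $\mathcal{A}$ and of $\mathcal{A}\otimes\mathcal{D}$, so \Cref{lem1} yields $\mathcal{A}\otimes\mathcal{D}\cong\mathcal{A}$, establishing the ``moreover'' clause and (i). For the converse (i)$\Rightarrow$(iii), I would run the same interleaving against an arbitrary elementary limit $\mathcal{B}\cong\mathcal{A}\otimes\mathcal{D}$: a round trip on the $\mathcal{A}\otimes\mathcal{D}$-side factors a structure map $\mathrm{M}_{x_{j_1}}(\mathcal{D})\to\mathrm{M}_{x_{j_2}}(\mathcal{D})$ through an elementary term, and computing its centralizer two ways forces $\prod_{j_1<l\le j_2}a_l=cex_T$; divisibility by $x_T$ then gives $S=\supp(\prod a_l)\subseteq S_T$, i.e. $\supp(x_T)\supseteq S$, while divisibility of the individual labels makes $\pi_S(\bar b_i)$ carry the factor $|T|^i$, whence $|T|K^+=K^+$ by \Cref{K_0_ultraelementary}; this is (iii).

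I expect the main obstacle to be the bookkeeping in the interleaving argument: one must choose the subsequences and the multisets $c$ so that the round-trip compositions equal the \emph{prescribed} structure maps and not merely have the correct label, which requires replacing embeddings by equivalent ones at each stage via \Cref{gr_NS} and \Cref{lm:equiv_embeddings}, as in the proof of \Cref{remove_matrix}. The conceptual crux, however, is the translation ``divisible by $x_T$ $\Leftrightarrow$ $T$-invariant $\Leftrightarrow\supp(\,\cdot\,)\subseteq S_T$'', which is precisely what links the combinatorics of centralizers (governed by \eqref{DDop}) to the spectral condition $S\subseteq S_T$ on $K_0^\mathrm{gr}(\mathcal{A})$; once this is in place, the realization of \Cref{K_0_ultraelementary} and Hazrat's classification \cite{Haz} supply the remaining bridges, and \Cref{K_0_division} offers an independent check that $K_0^G(\mathcal{A}\otimes\mathcal{D})$ matches $K_0^G(\mathcal{A})$ exactly when $S\subseteq S_T$.
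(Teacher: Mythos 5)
Your proposal is correct and follows essentially the same route as the paper: interleaving via \Cref{lem1}, centralizer/Brauer-group computations hinging on $\mathcal{D}\otimes\mathcal{D}^\mathrm{op}\cong\mathrm{M}_{x_T}$, the realization of \Cref{K_0_ultraelementary}, and the character computation $\pi_j(x_T)\in\{|T|,0\}$ for (ii)$\Leftrightarrow$(iii). The only (harmless) variation is that you extract divisibility of the labels by $x_T$ from the support condition $S\subseteq\supp(x_T)$ via the $T$-invariance characterization, whereas the paper derives $\bar a_1\cdots\bar a_{i_1}=x_Tc_1$ directly from $x_TK^+=K^+$ applied to $1\in K^+$; both yield the same factorization through $\mathrm{M}_{x_i}\otimes\mathcal{D}\otimes\mathcal{D}^\mathrm{op}$ and the same bookkeeping that the paper carries out with the swap of the two copies of $\mathcal{D}$.
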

\begin{proof}
(i) $\Rightarrow$ (ii): Let $\mathcal{A}=\lim\limits_{\longrightarrow}\mathrm{M}_{x_i}$ and 
suppose $\mathcal{A}\otimes\mathcal{D}\cong\lim\limits_{\longrightarrow}\mathrm{M}_{x_i'}$. Passing to subsequences, we may assume by \Cref{lem1} that we have
\[
\mathrm{M}_{x_0}\otimes\mathcal{D}\over{\varepsilon_0}{\longrightarrow}\mathrm{M}_{x_0'}\over{\varepsilon_0'}{\longrightarrow}\mathrm{M}_{x_1}\otimes\mathcal{D}\over{\varepsilon_1}{\longrightarrow}\cdots.
\]
Let $\mathcal{C}_i=\mathrm{Cent}_{\mathrm{M}_{x_i'}}(\varepsilon_i(\mathrm{M}_{x_i}\otimes\mathcal{D}))$, so $\mathrm{M}_{x_i'}\cong\mathrm{M}_{x_i}\otimes\mathcal{D}\otimes\mathcal{C}_i$. From the graded Brauer group, we get $\mathcal{C}_i\cong\mathrm{M}_{c_i}(\mathcal{D}^\mathrm{op})$, so we can factor $\varepsilon_i$ as
\[
\mathrm{M}_{x_i}\otimes\mathcal{D}\longrightarrow\mathrm{M}_{x_i}\otimes\mathcal{D}\otimes\mathcal{D}^\mathrm{op}\over{\varepsilon_i''}{\longrightarrow}\mathrm{M}_{x_i'}.
\]
Let $\mathcal{C}_i'=\mathrm{Cent}_{\mathrm{M}_{x_{i+1}}\otimes\mathcal{D}}(\varepsilon_i'\varepsilon_i''(\mathrm{M}_{x_i}\otimes\mathcal{D}\otimes\mathcal{D}^\mathrm{op}))$, so $\mathrm{M}_{x_{i+1}}\otimes\mathcal{D}\cong\mathrm{M}_{x_i}\otimes\mathcal{D}\otimes\mathcal{D}^\mathrm{op}\otimes\mathcal{C}_i'$. From the graded Brauer group, $\mathcal{C}_i'\cong\mathrm{M}_{c_i'}(\mathcal{D}')$ where $\mathcal{D}'\cong\mathcal{D}$.

Skipping the term $\mathrm{M}_{x_i'}$ and restricting to the centralizers of $\mathcal{D}$, we obtain
\[
\cdots\longrightarrow\mathrm{M}_{x_i}\longrightarrow\mathrm{M}_{x_i}\otimes\mathcal{D}^\mathrm{op}\otimes\mathcal{D}'\under{c_i'}{\longrightarrow}\mathrm{M}_{x_{i+1}}
%\longrightarrow\mathrm{M}_{x_{i+1}}\otimes\mathcal{D}^\mathrm{op}\otimes\mathcal{D}'
\longrightarrow\cdots
\]
Recall from \Cref{DDop} that $\mathcal{D}^\mathrm{op}\otimes\mathcal{D}'\cong\mathrm{M}_{x_T}$, so our sequence becomes 
\[
\mathrm{M}_{x_0}\under{x_T}{\longrightarrow}\mathrm{M}_{x_0x_T}\longrightarrow\mathrm{M}_{x_1}\under{x_T}{\longrightarrow}\mathrm{M}_{x_1x_T}\longrightarrow\cdots,
\]
where the composition $\mathrm{M}_{x_i}\to\mathrm{M}_{x_{i+1}}$ is the original embedding. It follows that  $x_T K_0^\mathrm{gr}(\mathcal{A})^+=K_0^\mathrm{gr}(\mathcal{A})^+$. Indeed, since $K_0^\mathrm{gr}(\mathcal{A})$ is the direct limit of $K_0^\mathrm{gr}$ of the above sequence, for any element $z\in K_0^\mathrm{gr}(\mathcal{A})^+$ there exists $i$ such that $z$ is the image of some $y\in\mathbb{Z}_{\ge 0}G$ under the homomorphism $K_0^\mathrm{gr}(\mathrm{M}_{x_i})\to K_0^\mathrm{gr}(\mathcal{A})$. But then $z$ is also the image of $x_Ty$ under the homomorphism $K_0^\mathrm{gr}(\mathrm{M}_{x_ix_T})\to K_0^\mathrm{gr}(\mathcal{A})$.

(ii) $\Rightarrow$ (i): Write $\mathcal{A}=\mathrm{M}_{x_0}\otimes\bigotimes_{i=1}^\infty\mathrm{M}_{a_i}$ in standard form (see \Cref{Def:standard_form} and \Cref{standard_form}) and let $(K,K^+,\pi_S(\bar{x}_0))$ be the realization of $K_0^\mathrm{gr}(\mathcal{A})$ from \Cref{K_0_ultraelementary}. Recall that
\[
K^+=\bigcup_{i=1}^\infty\frac{1}{\pi_S(\bar{a}_1\cdots\bar{a}_i)}\,\pi_S(\mathbb{Z}_{\ge0}G).
\] 
Now, suppose $x_T K^+=K^+$. Since $1\in K^+$, we have
\[
1=\frac{\pi_S(x_T)}{\pi_S(\bar{a}_1\cdots\bar{a}_{i_1})}\,\pi_S(c_1),
\]
for some $i_1\in\mathbb{N}$ and $c_1\in\mathbb{Z}_{\ge0}G$. Then $\pi_S(\bar{a}_1\cdots\bar{a}_{i_1})=\pi_S(x_Tc_1)$, which implies $\supp(c_1)\supseteq S$ because $\supp(\bar{a}_i)=\supp(a_i)=S$. Multiplying both sides by $\bar{a}_{i_1+1}$ and replacing $c_1$ with $c_1\bar{a}_{i_1+1}$ and $i_1$ with $i_1+1$, we may assume $\supp (c_1)=S$. Then $\pi_S(\bar{a}_1\cdots\bar{a}_{i_1})=\pi_S(x_Tc_1)$ implies $\bar{a}_1\cdots\bar{a}_{i_1}=x_Tc_1$. Hence, we can factor the embedding $\mathrm{M}_{x_0}\to \mathrm{M}_{x_{i_1}}$ as 
\begin{equation}\label{eq:0i1}
\mathrm{M}_{x_0}\longrightarrow\mathrm{M}_{x_0}\otimes\mathcal{D}\longrightarrow\mathrm{M}_{x_0}\otimes\mathcal{D}\otimes\mathcal{D}^\mathrm{op}\under{\bar{c}_1}{\longrightarrow}\mathrm{M}_{x_{i_1}}.
\end{equation}
By the same argument, there exists $i_2>i_1$ such that $\bar{a}_{i_1+1}\cdots \bar{a}_{i_2}=x_Tc_2$, so we can factor the embedding $\mathrm{M}_{x_{i_1}}\to \mathrm{M}_{x_{i_2}}$ as 
\begin{equation}\label{eq:i1i2}
\mathrm{M}_{x_{i_1}}\longrightarrow\mathrm{M}_{x_{i_1}}\otimes\mathcal{D}\longrightarrow\mathrm{M}_{x_{i_1}}\otimes\mathcal{D}\otimes\mathcal{D}^{\mathrm{op}}\under{\bar{c}_2}{\longrightarrow}\mathrm{M}_{x_{i_2}},
\end{equation}
From \Cref{eq:0i1}, we have $\mathrm{M}_{x_{i_1}}\cong\mathrm{M}_{x_0}\otimes\mathcal{D}\otimes\mathcal{C}$ where $\mathcal{C}\cong\mathrm{M}_{\bar{c}_1}\otimes\mathcal{D}^\mathrm{op}$.
Putting this into \Cref{eq:i1i2} and joining with \Cref{eq:0i1}, we get 
\[
\mathrm{M}_{x_0}{\longrightarrow}\mathrm{M}_{x_0}\otimes\mathcal{D}{\longrightarrow}\mathrm{M}_{x_{i_1}}\cong \mathrm{M}_{x_0}\otimes\mathcal{D}\otimes\mathcal{C}{\longrightarrow}\mathrm{M}_{x_0}\otimes\mathcal{D}\otimes\mathcal{C}\otimes\mathcal{D}\cong\mathrm{M}_{x_{i_1}}\otimes\mathcal{D}{\longrightarrow}\mathrm{M}_{x_{i_2}},
\]
where the compositions $\mathrm{M}_{x_0}\to\mathrm{M}_{x_{i_1}}$ and $\mathrm{M}_{x_{i_1}}\to\mathrm{M}_{x_{i_2}}$ are still the original embeddings and $\mathrm{M}_{x_0}\otimes\mathcal{D}\otimes\mathcal{C}\to\mathrm{M}_{x_0}\otimes\mathcal{D}\otimes\mathcal{C}\otimes\mathcal{D}$ is given by $b\otimes d\otimes c\mapsto b\otimes d\otimes c\otimes 1$. Replacing this with $b\otimes d\otimes c\mapsto b\otimes 1\otimes c\otimes d$ and pre-composing the isomorphism $\mathrm{M}_{x_0}\otimes\mathcal{D}\otimes\mathcal{C}\otimes\mathcal{D}\cong\mathrm{M}_{x_{i_1}}\otimes\mathcal{D}$ with the swap of the two copies of $\mathcal{D}$, we get 
\[
\mathrm{M}_{x_0}\over{\varepsilon_0}{\longrightarrow}\mathrm{M}_{x_{i_0}}\otimes\mathcal{D}\over{\varepsilon'_0}{\longrightarrow}\mathrm{M}_{x_{i_1}}\over{\varepsilon_1}{\longrightarrow}\mathrm{M}_{x_{i_1}}\otimes\mathcal{D}\over{\varepsilon'_1}{\longrightarrow}\mathrm{M}_{x_{i_2}},
\]
where $\varepsilon'_0\varepsilon_0$ and $\varepsilon'_1\varepsilon_1$ are the original embeddings and $\varepsilon_1\varepsilon'_0$ is the original embedding tensored with $\mathrm{id}_\mathcal{D}$.
%Then
%\[
%\mathrm{M}_{x_0}{\longrightarrow}\mathrm{M}_{x_0}\otimes\mathcal{D}\under{\bar{c}_1}{\longrightarrow}\mathrm{M}_{x_{i_1}}{\longrightarrow}\mathrm{M}_{x_{i_1}}\otimes\mathcal{D}'\longrightarrow\mathrm{M}_{x_{i_1}}\otimes\mathcal{D}'\otimes\mathcal{D}^{\prime\mathrm{op}}\under{\bar{c}_2}{\longrightarrow}\mathrm{M}_{x_{i_2}}.
%\]
%Replacing $\mathrm{M}_{x_{i_1}}\cong\mathrm{M}_{x_0}\otimes\mathcal{D}\otimes\mathcal{C}_1$ by its isomorphic copy $\mathrm{M}_{x_0}\otimes\mathcal{C}_1\otimes\mathcal{D}'$ in $\mathrm{M}_{x_{i_1}}\otimes\mathcal{D}'\cong\mathrm{M}_{x_0}\otimes\mathcal{D}\otimes\mathcal{C}_1\otimes\mathcal{D}'$, we can change the middle embeddings:
%\[
%\mathrm{M}_{x_0}{\longrightarrow}\mathrm{M}_{x_0}\otimes\mathcal{D}\under{c_1}{\longrightarrow}\mathrm{M}_{x_{i_1}}{\longrightarrow}\mathrm{M}_{x_{i_1}}\otimes\mathcal{D}\longrightarrow\mathrm{M}_{x_{i_1}}\otimes\mathcal{D}\otimes\mathcal{D}^{\prime\mathrm{op}}\longrightarrow\mathrm{M}_{x_{i_2}}
%\]
%so that the compositions $\mathrm{M}_{x_0}\longrightarrow\mathrm{M}_{x_{i_1}}$ and $\mathrm{M}_{x_{i_1}}\longrightarrow\mathrm{M}_{x_{i_2}}$ are still the original embeddings and the composition $\mathrm{M}_{x_0}(\mathcal{D})\longrightarrow\mathrm{M}_{x_{i_1}}(\mathcal{D})$ is the original embedding tensored with $\mathrm{id}_\mathcal{D}$.

Continuing this process and applying \Cref{lem1}, we conclude that $\mathcal{A}\otimes\mathcal{D}\cong\mathcal{A}$. Thus, conditions (i) and (ii) are equivalent to each other, and the last assertion of the theorem holds.

(ii) $\Rightarrow$ (iii): If $x_T K^+=K^+$, then it is clear that $S\supseteq\supp x_T$. 
Since $x_T^2=|T|x_T$, we also obtain $|T|K^+=|T|(x_TK^+)=x_T^2K^+=K^+$.

(iii) $\Rightarrow$ (ii): Note that the element $\frac{1}{|T|}x_T$ is the sum of the primitive idempotents $e_j\in\mathbb{Q}G$ over all $j\in\supp(x_T)$. Since $S\subseteq\supp(x_T)$, it follows that $\frac1{|T|}x_T\cdot K^+=K^+$. But $|T| K^+=K^+$ implies $\frac1{|T|}K^+=K^+$, so we get $x_TK^+=K^+$.
\end{proof}

\begin{remark}\label{rem:supp_xT}
For any subgroup $T$ of $G$, we have $\supp(x_T)=T^\perp/\mathrm{Gal}(\overline{\mathbb{Q}}/\mathbb{Q})$, where $T^\perp$ is the subgroup of $\hat{G}$ defined by $T^\perp=\{\chi\in\hat{G}\mid\chi(t)=1,\forall t\in T\}$.
%$$
%\supp(x_T)=\{j\in\hat{G}/\Gamma\mid\chi(t)=1,\forall t\in T,\,\chi\in j\}.
%$$
In particular, condition $\supp(x_T)\supseteq S$ in \Cref{thm} is equivalent to $T\subseteq S^\perp$, where $S^\perp$ is the subgroup of $G$ defined by $S^\perp=\{g\in G\mid\chi(g)=1,\forall\chi\in j,\,j\in S\}$.
\end{remark}

The next example shows that the $K_0^\mathrm{gr}$ functor is not enough to distinguish direct limit of central simple algebras with $G$-gradings.
\begin{example}\label{example3}
Let $G=T=C_2\times C_2$ and let $\mathcal{D}$ be the graded-division algebra $\mathcal{Q}$ with support $T$ defined in \Cref{ex1}. Let $\mathcal{A}=\mathrm{M}_2\otimes\bigotimes_{i=1}^\infty\mathrm{M}_2$ and $\mathcal{A}'=\mathrm{M}_{x_T}\otimes\bigotimes_{i=1}^\infty\mathrm{M}_{x_T}$, and denote $R:=\mathbb{Z}\left[\frac12\right]$. By \Cref{K_0_ultraelementary}, we have $K_0^\mathrm{gr}(\mathcal{A})\cong(RG,R_{\ge0}G,2)$ and $K_0^\mathrm{gr}(\mathcal{A}')\cong(x_TRG,x_TR_{\ge0}G,x_T)\cong(R(G/T),R_{\ge 0}(G/T),|T|)=(R,R_{\ge 0},4)$, so $\mathcal{A}\not\cong\mathcal{A}'$. On the other hand, \Cref{K_0_division} tells us that, as $\mathbb{Z}T$-module, $K_0^\mathrm{gr}(\mathcal{A}\otimes\mathcal{D})\cong(R,R_{\ge0},2)$, where each element of $T$ acts trivially. It follows that $\mathcal{A}\otimes\mathcal{D}\not\cong\mathcal{A}'$, for otherwise, \Cref{thm} would tell us that $\mathcal{A}\cong\mathcal{A}\otimes\mathcal{D}\cong\mathcal{A}'$, a contradiction. Hence, $K_0^\mathrm{gr}(\mathcal{A}\otimes\mathcal{D})\cong K_0^\mathrm{gr}(\mathcal{A}')$, but $\mathcal{A}\otimes\mathcal{D}\not\cong\mathcal{A}'$.
\end{example}

Finally, we are ready to give an isomorphism criterion:
\begin{Thm}\label{last_prop}
Let $G$ be a finite abelian group and $\mathbb{F}$ an algebraically closed field. Let $\mathcal{A}$ and $\mathcal{A}'$ be direct limits of matrix algebras over $\mathbb{F}$ endowed with elementary $G$-gradings. Let $\mathcal{D}$ and $\mathcal{D}'$ be $G$-graded-division algebras that are central simple as algebras, and write $\mathcal{D}\otimes\mathcal{D}^{\prime\mathrm{op}}\cong\mathrm{M}_y(\mathcal{E})$ where $y\in\mathbb{Z}_{\ge 0}G$ and $\mathcal{E}$ is a graded-division algebra. Denote $T=\Supp\mathcal{D}$, $T'=\Supp\mathcal{D}'$, $E=\Supp\mathcal{E}$ and $x_H=\sum_{h\in H}h$ for any $H\leq G$.
Then, $\mathcal{A}\otimes\mathcal{D}\cong\mathcal{A}'\otimes\mathcal{D}'$ if and only if (i) $x_E\cdot K_0^\mathrm{gr}(\mathcal{A})^+=K_0^\mathrm{gr}(\mathcal{A})^+$ and (ii) $\mathrm{M}_y\otimes\mathcal{A}\cong\mathrm{M}_{x_{T'}}\otimes\mathcal{A}'$.
\end{Thm}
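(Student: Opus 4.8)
The plan is to reduce everything to the elementary case by tensoring with $\mathcal{D}'^{\mathrm{op}}$, which trivializes the division part $\mathcal{D}'$ via \Cref{DDop}. Write $\mathcal{B}=\mathcal{A}\otimes\mathcal{D}$ and $\mathcal{B}'=\mathcal{A}'\otimes\mathcal{D}'$. I first record the two Brauer-group identities that drive the argument: from $\mathcal{D}\otimes\mathcal{D}'^{\mathrm{op}}\cong\mathrm{M}_y(\mathcal{E})$ and $\mathcal{D}'\otimes\mathcal{D}'^{\mathrm{op}}\cong\mathrm{M}_{x_{T'}}$, tensoring by $\mathcal{D}'$ gives
\[
\mathrm{M}_{x_{T'}}\otimes\mathcal{D}\cong\mathrm{M}_y\otimes\mathcal{E}\otimes\mathcal{D}',
\]
and, since $[\mathcal{E}^{\mathrm{op}}\otimes\mathcal{D}]=[\mathcal{D}']$ in $\mathrm{Br}_G(\mathbb{F})$, also $\mathcal{E}^{\mathrm{op}}\otimes\mathcal{D}\cong\mathrm{M}_v(\mathcal{D}')$ for some $v\in\mathbb{Z}_{\ge0}G$. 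Moreover $\mathcal{E}$ is central simple, being a factor of the central simple algebra $\mathcal{D}\otimes\mathcal{D}'^{\mathrm{op}}$, so \Cref{thm} applies to the pair $(\mathcal{A},\mathcal{E})$; as $\mathcal{E}$ and $\mathcal{E}^{\mathrm{op}}$ share the support $E$, condition (i) is equivalent to each of $\mathcal{A}\otimes\mathcal{E}\cong\mathcal{A}$ and $\mathcal{A}\otimes\mathcal{E}^{\mathrm{op}}\cong\mathcal{A}$.

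For the forward direction I would assume $\mathcal{B}\cong\mathcal{B}'$ and tensor with $\mathcal{D}'^{\mathrm{op}}$; using the identities above this becomes
\[
\mathrm{M}_y\otimes\mathcal{A}\otimes\mathcal{E}\cong\mathrm{M}_{x_{T'}}\otimes\mathcal{A}'.
\]
The right-hand side is visibly a direct limit of matrix algebras with elementary gradings, hence so is the left-hand side; applying \Cref{thm} to $(\mathrm{M}_y\otimes\mathcal{A},\mathcal{E})$ then forces $\mathcal{E}$ to be absorbed, i.e.\ $\mathrm{M}_y\otimes\mathcal{A}\otimes\mathcal{E}\cong\mathrm{M}_y\otimes\mathcal{A}$, and yields the absorption condition for $\mathrm{M}_y\otimes\mathcal{A}$. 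Since tensoring with $\mathrm{M}_y$ leaves the positive cone of $K_0^{\mathrm{gr}}(\mathcal{A})$ unchanged (only the order-unit is scaled, as is transparent in the realization of \Cref{K_0_ultraelementary}), this absorption condition is exactly (i), and substituting $\mathrm{M}_y\otimes\mathcal{A}\otimes\mathcal{E}\cong\mathrm{M}_y\otimes\mathcal{A}$ into the display gives (ii). This direction is essentially routine.

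The converse is where the real work lies, and the main obstacle is that one is forced to cancel the \emph{nontrivially} graded factor $\mathrm{M}_{x_{T'}}$, whereas \Cref{remove_matrix} removes only a trivially graded $\mathrm{M}_{n}$. The device that resolves this is condition (i): using $\mathcal{A}\otimes\mathcal{E}^{\mathrm{op}}\cong\mathcal{A}$ together with $\mathcal{E}^{\mathrm{op}}\otimes\mathcal{D}\cong\mathrm{M}_v(\mathcal{D}')$ I would rewrite
\[
\mathcal{B}=\mathcal{A}\otimes\mathcal{D}\cong\mathcal{A}\otimes\mathcal{E}^{\mathrm{op}}\otimes\mathcal{D}\cong\mathrm{M}_v\otimes\mathcal{A}\otimes\mathcal{D}',
\]
so that $\mathcal{B}$, exactly like $\mathcal{B}'$, becomes a direct limit of matrix algebras over the \emph{same} graded-division algebra $\mathcal{D}'$ of support $T'$. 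Over $\mathcal{D}'$ the graded-Wedderburn invariant is the image of the multiset in $\mathbb{Z}_{\ge0}(G/T')$, where $x_{T'}$ and $|T'|$ both map to $|T'|$; hence $\mathrm{M}_{x_{T'}}\otimes\mathcal{B}\cong\mathrm{M}_{|T'|}\otimes\mathcal{B}$ and likewise $\mathrm{M}_{x_{T'}}\otimes\mathcal{B}'\cong\mathrm{M}_{|T'|}\otimes\mathcal{B}'$.

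Finally, combining (i) and (ii) through the first Brauer identity gives $\mathrm{M}_{x_{T'}}\otimes\mathcal{B}\cong\mathrm{M}_y\otimes\mathcal{A}\otimes\mathcal{D}'\cong\mathrm{M}_{x_{T'}}\otimes\mathcal{B}'$, which chains with the two displays of the previous paragraph to $\mathrm{M}_{|T'|}\otimes\mathcal{B}\cong\mathrm{M}_{|T'|}\otimes\mathcal{B}'$. Now the common factor is trivially graded, so \Cref{remove_matrix} cancels it and gives $\mathcal{B}\cong\mathcal{B}'$, i.e.\ $\mathcal{A}\otimes\mathcal{D}\cong\mathcal{A}'\otimes\mathcal{D}'$. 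The points I expect to need the most care in a full write-up are verifying that the graded-Wedderburn isomorphism $\mathrm{M}_{x_{T'}v}(\mathcal{D}')\cong\mathrm{M}_{|T'|v}(\mathcal{D}')$ is compatible with the direct-limit structure (it is, being $\mathrm{id}_{\mathcal{A}}$ tensored with a fixed isomorphism of the coefficient algebra) and that the absorption condition transfers between $\mathcal{A}$ and $\mathrm{M}_y\otimes\mathcal{A}$.
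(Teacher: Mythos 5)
Your proposal is correct and follows essentially the same route as the paper's proof: the forward direction by tensoring with $\mathcal{D}^{\prime\mathrm{op}}$ and invoking \Cref{thm} (noting that $\mathrm{M}_y$ changes only the order-unit in the realization of \Cref{K_0_ultraelementary}), and the converse by absorbing $\mathcal{E}^{\mathrm{op}}$ to land over the common coefficient algebra $\mathcal{D}'$, replacing $\mathrm{M}_{x_{T'}}$ by $\mathrm{M}_{|T'|}$ via $\mathrm{M}_{x_{T'}}(\mathcal{D}')\cong\mathrm{M}_{|T'|}(\mathcal{D}')$, and cancelling the trivially graded factor with \Cref{remove_matrix}. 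The only differences are cosmetic: the paper writes $\mathcal{D}\otimes\mathcal{E}^{\mathrm{op}}\cong\mathrm{M}_z(\mathcal{D}')$ inside one chain of isomorphisms rather than first isolating $\mathcal{A}\otimes\mathcal{D}\cong\mathrm{M}_v\otimes\mathcal{A}\otimes\mathcal{D}'$ as you do.
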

\begin{proof}
Assume that $\mathcal{A}\otimes\mathcal{D}\cong\mathcal{A}'\otimes\mathcal{D}'$. Then, we have
\[
\underbrace{\mathcal{A}\otimes\mathcal{D}\otimes\mathcal{D}^{\prime\mathrm{op}}}_{\cong\mathcal{A}\otimes\mathrm{M}_y\otimes\mathcal{E}}\cong\mathcal{A}'\otimes\mathcal{D}'\otimes\mathcal{D}^{\prime\mathrm{op}}.
\]
Since $\mathcal{A}'\otimes\mathcal{D}'\otimes\mathcal{D}^{\prime\mathrm{op}}$ is a direct limit of matrix algebras with elementary gradings, by \Cref{thm}, we get $x_E\cdot K_0^\mathrm{gr}(\mathrm{M}_y\otimes\mathcal{A})^+=K_0^\mathrm{gr}(\mathrm{M}_y\otimes\mathcal{A})^+$. But $K_0^\mathrm{gr}(\mathrm{M}_y\otimes\mathcal{A})$ and $K_0^\mathrm{gr}(\mathcal{A})$ are isomorphic as ordered $\mathbb{Z}G$-modules, though possibly have different order-units (see \cite[Theorem 5.2.5]{Haz}, or \Cref{K_0_ultraelementary} and \Cref{rem:tensor_with_elem}), hence we get (i). %$x_EK_0^\mathrm{gr}(\mathcal{A})^+=K_0^\mathrm{gr}(\mathcal{A})^+$. 
%Since $\mathcal{D}'\otimes\mathcal{D}^\mathrm{op}\cong\mathrm{M}_{\bar{y}}(\mathcal{E}^\mathrm{op})$ and $\Supp\mathcal{E}^\mathrm{op}=E$, we get the second part of (i) by replacing $\mathcal{A}$ by $\mathcal{A}'$ in the above argument. 
To prove (ii), note that $\mathcal{A}\cong\mathcal{E}\otimes\mathcal{A}$ by \Cref{thm}, hence
\[
\mathrm{M}_y\otimes\mathcal{A}\cong\mathrm{M}_y(\mathcal{E})\otimes\mathcal{A}\cong\mathcal{D}^{\prime\mathrm{op}}\otimes\mathcal{D}\otimes\mathcal{A}\cong\mathcal{D}^{\prime\mathrm{op}}\otimes\mathcal{D}'\otimes\mathcal{A}'\cong\mathrm{M}_{x_{T'}}\otimes\mathcal{A}'.
\]
Conversely, if (i) and (ii) hold, then $\mathcal{A}\cong\mathcal{E}\otimes\mathcal{A}$ by \Cref{thm} and, hence,
\[
\mathcal{D}\otimes\mathcal{D}^{\prime\mathrm{op}}\otimes\mathcal{A}\cong\mathcal{E}\otimes\mathrm{M}_y\otimes\mathcal{A}\cong\mathrm{M}_y\otimes\mathcal{A}\cong\mathrm{M}_{x_{T'}}\otimes\mathcal{A}'.
\]
Tensoring with $\mathcal{D}'$, we get 
\[
\mathrm{M}_{x_{T'}}\otimes\mathcal{D}\otimes\mathcal{A}\cong\mathrm{M}_{x_{T'}}\otimes\mathcal{D}'\otimes\mathcal{A}'\cong\mathrm{M}_{|T'|}\otimes\mathcal{D}'\otimes\mathcal{A}'.
\]
On the other hand, since $[\mathcal{E}]=[\mathcal{D}][\mathcal{D}']^{-1}$ in $\mathrm{Br}_G(\mathbb{F})$, we have $\mathcal{D}\otimes\mathcal{E}^\mathrm{op}\cong\mathrm{M}_z(\mathcal{D}')$, for some $z\in\mathbb{Z}_{\ge0}G$. Hence,
\[
\begin{split}
\mathrm{M}_{x_{T'}}\otimes\mathcal{D}\otimes\mathcal{A}&\cong\mathrm{M}_{x_{T'}}\otimes\mathcal{D}\otimes\mathcal{E}^\mathrm{op}\otimes\mathcal{A}\cong\mathrm{M}_{x_{T'}}\otimes\mathcal{D}'\otimes\mathrm{M}_z\otimes\mathcal{A}\\
&\cong\mathrm{M}_{x_{T'}}(\mathcal{D}')\otimes\mathrm{M}_z\otimes\mathcal{A}\cong\mathrm{M}_{|T'|}(\mathcal{D}')\otimes\mathrm{M}_z\otimes\mathcal{A}\\
&\cong\mathrm{M}_{|T'|}\otimes\mathcal{D}'\otimes\mathrm{M}_z\otimes\mathcal{A}\cong\mathrm{M}_{|T'|}\otimes\mathcal{D}\otimes\mathcal{E}^\mathrm{op}\otimes\mathcal{A}\cong\mathrm{M}_{|T'|}\otimes\mathcal{D}\otimes\mathcal{A}.
\end{split}
\]
We have shown that
$
\mathrm{M}_{|T'|}\otimes\mathcal{D}\otimes\mathcal{A}\cong\mathrm{M}_{|T'|}\otimes\mathcal{D}'\otimes\mathcal{A}',
$
which implies $\mathcal{D}\otimes\mathcal{A}\cong\mathcal{D}'\otimes\mathcal{A}'$ by \Cref{remove_matrix}.
\end{proof}

\begin{remark}\label{rem:tensor_with_elem}
It is worth mentioning that not only condition (i), but also (ii) can be checked using the realization of $K_0^\mathrm{gr}$ given in \Cref{K_0_ultraelementary}. Indeed, write $\mathcal{A}\cong\mathrm{M}_{x_0}\otimes\bigotimes_{i=1}^\infty\mathrm{M}_{a_i}$ in standard form. Then, since $\mathrm{supp}(x_0)\subseteq S$ implies $\mathrm{supp}(x_0y)\subseteq S$, we have $\mathrm{M}_y\otimes\mathcal{A}\cong\mathrm{M}_{yx_0}\otimes\bigotimes_{i=1}^\infty\mathrm{M}_{a_i}$ in standard form.
%Hence, the theorem reduces the isomorphism problem to conditions that can be effectively checked.
\end{remark}

\begin{corollary}
$\mathcal{A}\otimes\mathcal{D}\cong\mathcal{A}'\otimes\mathcal{D}$ if and only if $\mathcal{A}\otimes\mathrm{M}_{x_T}\cong\mathcal{A}'\otimes\mathrm{M}_{x_T}$.
\end{corollary}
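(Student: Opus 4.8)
The plan is to obtain this corollary as a direct specialization of \Cref{last_prop} to the case $\mathcal{D}'=\mathcal{D}$. Here $\mathcal{A}$ and $\mathcal{A}'$ are direct limits of matrix algebras with elementary gradings and $\mathcal{D}$ is a $G$-graded-division algebra that is central simple as an algebra, with support $T$, so the hypotheses of \Cref{last_prop} are all in force. The whole task reduces to unwinding what conditions (i) and (ii) of that theorem become when the two graded-division algebras coincide.

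The key computation is to identify the data $y$, $\mathcal{E}$, $E$, $T'$ appearing in \Cref{last_prop} in this situation. By \eqref{DDop} we have $\mathcal{D}\otimes\mathcal{D}^{\mathrm{op}}\cong\mathrm{M}_{x_T}(\mathbb{F})$, so in the factorization $\mathcal{D}\otimes\mathcal{D}^{\prime\mathrm{op}}\cong\mathrm{M}_y(\mathcal{E})$ we may take $y=x_T$ and $\mathcal{E}=\mathbb{F}$ with its (trivial) elementary grading. Consequently $E=\Supp\mathcal{E}=\{e\}$ and $x_E=e$, the identity of the group semiring $\mathbb{Z}_{\ge0}G$, while $T'=\Supp\mathcal{D}'=T$, so $x_{T'}=x_T$. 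With these identifications, condition (i) reads $e\cdot K_0^\mathrm{gr}(\mathcal{A})^+=K_0^\mathrm{gr}(\mathcal{A})^+$, which holds automatically, and condition (ii) reads $\mathrm{M}_{x_T}\otimes\mathcal{A}\cong\mathrm{M}_{x_T}\otimes\mathcal{A}'$. Since the tensor product of $G$-graded algebras is commutative up to graded isomorphism (as $G$ is abelian), the latter is equivalent to $\mathcal{A}\otimes\mathrm{M}_{x_T}\cong\mathcal{A}'\otimes\mathrm{M}_{x_T}$. Thus \Cref{last_prop} yields $\mathcal{A}\otimes\mathcal{D}\cong\mathcal{A}'\otimes\mathcal{D}$ if and only if $\mathcal{A}\otimes\mathrm{M}_{x_T}\cong\mathcal{A}'\otimes\mathrm{M}_{x_T}$, which is exactly the claim.

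Because the argument is purely a specialization, I do not expect a genuine obstacle. The only point that requires care is verifying that condition (i) becomes vacuous: this rests precisely on the fact that $\mathcal{D}\otimes\mathcal{D}^{\mathrm{op}}$ is Brauer-trivial, i.e.\ $\mathcal{E}=\mathbb{F}$, so that its support $E$ is trivial and $x_E=e$ acts as the identity on $K_0^\mathrm{gr}(\mathcal{A})^+$. One should also confirm that the choice $y=x_T$, $\mathcal{E}=\mathbb{F}$ is consistent with the normalization in \Cref{last_prop} (it is, up to a shift of $y$, which does not affect the graded isomorphism type of $\mathrm{M}_y\otimes\mathcal{A}$), after which the equivalence follows immediately.
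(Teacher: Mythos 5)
Your proof is correct and matches the paper's intent exactly: the corollary is stated immediately after \Cref{last_prop} with no separate argument, precisely because it is the specialization $\mathcal{D}'=\mathcal{D}$, where \eqref{DDop} gives $y=x_T$, $\mathcal{E}=\mathbb{F}$, $E=\{e\}$, so condition (i) is vacuous and condition (ii) reduces to $\mathrm{M}_{x_T}\otimes\mathcal{A}\cong\mathrm{M}_{x_T}\otimes\mathcal{A}'$. Your identification of all the relevant data ($y$, $\mathcal{E}$, $E$, $T'$) is the right and only point that needs checking.
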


Condition (ii) in \Cref{last_prop} implies that $K_0^\mathrm{gr}(\mathcal{A})$ and $K_0^\mathrm{gr}(\mathcal{A}')$ are isomorphic as ordered $\mathbb{Z}G$-modules (which corresponds to $\mathcal{A}$ and $\mathcal{A}'$ being Morita equivalent, see \cite[Theorem 5.2.5]{Haz}). To obtain representatives of isomorphism classes of $\mathcal{A}\otimes\mathcal{D}$ for a fixed isomorphism class $(K,K^+)$ of $K_0^\mathrm{gr}(\mathcal{A})$, we can use representatives of $\mathrm{Br}_G(\mathbb{F})$ under the following equivalence relation: $[\mathcal{D}]\sim[\mathcal{D}']$ if and only if condition (i) holds, i.e., $x_E\cdot K^+=K^+$ where $E=\Supp\mathcal{E}$ and $[\mathcal{E}]=[\mathcal{D}][\mathcal{D}']^{-1}$ (equivalently, $|E|\cdot K^+=K^+$ and $E\subseteq S^\perp$, see \Cref{rem:supp_xT}). Indeed, if $[\mathcal{D}]\sim[\mathcal{D}']$ then we can rewrite $\mathcal{A}\otimes\mathcal{D}'\cong\mathcal{A}\otimes\mathcal{E}\otimes\mathcal{D}'\cong\mathrm{M}_y(\mathcal{A})\otimes\mathcal{D}$. Finally, for a fixed $\mathcal{D}$, we have $\mathcal{A}\otimes\mathcal{D}\cong\mathcal{A}'\otimes\mathcal{D}$ if and only if $\mathcal{A}\otimes\mathrm{M}_{x_T}\cong\mathcal{A}'\otimes\mathrm{M}_{x_T}$, and the latter can be checked using \Cref{thm1}.

\end{document}